\declaretheorem[style=remark]{example}
\renewcommand\thmcontinues[1]{Continued}
\DeclareMathOperator\erfc{erfc}
\title[Scenario Approach]{Scenario approach for minmax optimization with emphasis on the nonconvex case: positive results and caveats}
\author[M. Assif P. K., D. Chatterjee, and R. Banavar]{Mishal Assif P. K.}
\address{Department of Mechanical Engineering\\ IIT Bombay, Powai\\ Mumbai 400076, India\\ \url{https://mishalassif.github.io}}
\author{Debasish Chatterjee and Ravi Banavar}
\address{Systems \& Control Engineering\\ IIT Bombay, Powai\\ Mumbai 400076, India\\ \url{http://www.sc.iitb.ac.in/~chatterjee}\\\url{http://www.sc.iitb.ac.in/~banavar}}
\email{\{mishal\_assif, dchatter, banavar\}@iitb.ac.in}
\thanks{This work was supported in part by scholarships from the Ministry of Human Resource \& Development, Government of India. We thank Soumik Pal for helpful discussions.}
\keywords{robust optimization, scenario approach, nonconvex programs}
\begin{document}

	\begin{abstract}
		We treat the so-called \emph{scenario approach}, a popular probabilistic approximation method for robust minmax optimization problems via independent and indentically distributed (i.i.d) sampling from the uncertainty set, from various perspectives. The scenario approach is well-studied in the important case of convex robust optimization problems, and here we examine how the phenomenon of concentration of measures affects the i.i.d sampling aspect of the scenario approach in high dimensions and its relation with the optimal values. Moreover, we perform a detailed study of both the asymptotic behaviour (consistency) and finite time behaviour of the scenario approach in the more general setting of nonconvex minmax optimization problems. In the direction of the asymptotic behaviour of the scenario approach, we present an obstruction to consistency that arises when the decision set is noncompact. In the direction of finite sample guarantees, we establish a general methodology for extracting ``probably approximately correct'' type estimates for the finite sample behaviour of the scenario approach for a large class of nonconvex problems.
    \end{abstract}

	\maketitle

	\section{The problem, prior results, perspectives and prelude to our results}
		\label{sec:opt prob}
\subsection{Minmax optimization and scenario approximation}

The minmax optimization problem is typically phrased as follows: Let $\scOptDim$ be a positive integer and ($\uncertainSet, \genMetric$) be a metric space. Let $\scOptSet$ be a nonempty subset of $\R^{\scOptDim}$ and $( \uncertainSet, \borelsa{\uncertainSet}, \probMeasure )$ be a probability space where $\borelsa{\uncertainSet}$  is the Borel $\sigma$-algebra on $\uncertainSet$ induced by the metric $\genMetric$. Let $\scCost : \scOptSet \times \uncertainSet \lra\R$ be a lower semicontinous (l.s.c) function.\footnote{Recall that a function $F: \uncertainSet \lra\R$ is \emph{lower semicontinous} (l.s.c.) if every sublevel sets of $F$ is closed, i.e., $\set[\big]{z \in \uncertainSet \suchthat F(z) \leq t}$ is closed for all $t \in \R$.}
 We are interested in the following robust optimization problem:
\begin{equation}\label{eq:ROP}
\begin{aligned}
	\ropOptSolution \Let \inf_{\scOptElement \in \scOptSet}  \ \sup_{\uncertainParam \in \uncertainSet} \scCost(\scOptElement, \uncertainParam).
\end{aligned}
\end{equation}
Here on the one hand, $\scOptElement$ plays the role of the decision variable, and $\scOptSet$ is the set of variables from which a choice of one decision has to be made. On the other hand, $\uncertainParam$ plays the role of a parameter that affects the cost associated with each decision variable, and takes a fixed, albeit unknown, value in the set $\uncertainSet$. In problem \eqref{eq:ROP}, in effect, we pick a decision variable that incurs the least cost assuming that the worst possible value of $\uncertainParam$ corresponding to each value of the decision variable is realised. 

If $\uncertainSet$ is an infinite set, then the minmax optimization problem \eqref{eq:ROP} is an example of a semi-infinite optimization problem. Semi-infinite problems have been reported to be computationally intractable to solve in general \cite{ref:BenNem-98, ref:BenNem-99, ref:BenNemRoo-01}. Nevertheless, such optimization problems are of great importance in engineering, and, consequently, there is a natural need to find computationally tractable tight approximations to the problem \eqref{eq:ROP}. The central object of study in this work is the following approximation to $\eqref{eq:ROP}$:
\begin{equation}\label{eq:SOP}
\begin{aligned}
	\sopOptSolution[\sampleSize] \Let \inf_{\scOptElement \in \scOptSet}  \ \max_{i = 1, .., \sampleSize} \scCost(\scOptElement, \uncertainParam_i),
\end{aligned}
\end{equation}
where $(\uncertainParam_i)_{i=1}^{\sampleSize}$ is an independent and identially distributed (i.i.d) sequence of elements sampled from $\uncertainSet$. This approximation is also known as the \emph{scenario approximation} to the minmax optimization problem \eqref{eq:ROP} \cite{ref:CalCam-05}. We call each instance of the optimization problem \eqref{eq:SOP} corresponding to the sample \sample{i}{\sampleSize} a \emph{scenario optimization problem}. Observe that each scenario optimization problem is no longer semi-infinite since the inner maximum involves only finitely many variables. This makes the scenario optimization problem computationally more tractable, at least for moderate values of $\sampleSize$, than otherwise; and this is an attractive feature of \eqref{eq:SOP}.


\subsection{Desirable properties of scenario approximation}
Before proceeding further we record the two following definitions for future reference:
\begin{equation}
\begin{aligned}
	\marginalFunction(\scOptElement) & \Let \sup_{\uncertainParam \in \uncertainSet} \scCost(\scOptElement, \uncertainParam), \quad \text{ and} \\
	\sampledMargFn{\sampleSize}(\scOptElement) & \Let \max_{i = 1, .., \sampleSize} \scCost(\scOptElement, \uncertainParam_i).
\end{aligned}
\end{equation}
We note that $\sampledMargFn{\sampleSize}$ involves an abuse of notation since $\sampledMargFn{\sampleSize}$ depends on the sample $\sample{i}{\sampleSize}$ and not just on its size $\sampleSize$; however, we suppress the explicit mention of this dependence in the interest of brevity. It follows immediately from the definition that $\sampledMargFn{\sampleSize}(\scOptElement) \leq \marginalFunction(\scOptElement) \ \text{ for all } \scOptElement \in \scOptSet \text{ and } \sampleSize \in \N,$ which means that
\begin{equation}
\label{eq:obv bound}
	\sopOptSolution = \inf_{\scOptElement \in \scOptSet} \sampledMargFn{\sampleSize}(\scOptElement) \leq \inf_{\scOptElement \in \scOptSet} \marginalFunction(\scOptElement) = \ropOptSolution.
\end{equation}
In other words, the value of each scenario approximation \eqref{eq:SOP} is always an approximation of $\ropOptSolution$ from below. 

There naturally arises questions about the goodness of such approximations. One natural notion of goodness of the scenario approximation scheme is qualified in the form of:\par
\noindent\textbf{(G1) Consistency:} Recall that a numerical approximation procedure is said to be consistent if, intuitively as the level of approximation is made ``finer'', the approximate solution it computes converges to the actual solution of the problem being approximated. Consistency is a very rudimentary property that most sound numerical approximation procedures are expected to possess, and we say that the scenario approach is consistent if
\begin{equation*}
\prodProb{\probMeasure}{\infty}\left( \set[\Big]{ \sample{i}{+\infty} \suchthat \lim_{\sampleSize \to +\infty} \sopOptSolution[\sampleSize] = \ropOptSolution } \right) = 1.
\end{equation*}

To wit, the scenario approach is consistent if for almost every (countable) sequence of samples from $\uncertainSet$, the approximate solution computed by the scenario approach using only a finite inital segment of the sequence converges to the solution of the original problem with the length of the initial segment. We will study consistency of the scenario approach in greater detail in the later sections. In particular, we will establish an obstruction to consistency that arises when the set $\scOptSet$ is noncompact, a condition under which the scenario approach is \emph{guaranteed} to be inconsistent. 

A second desirable property of the scenario approximation is a good quality of:\par
\noindent \textbf{(G2) Finite sample behaviour:} Observe that the condition of consistency is purely asymptotic; it gives us no information about the nature of the approximate solution computed after 
drawing only a finite number of samples. But in the real world, information regarding the finite sample behaviour of the scenario approach is crucial and the behaviour of the scenario approach after drawing finitely many samples also warrants attention. In addition, it is desirable that this information is available to us \emph{a priori}, before the approximation procedure is executed so that the number of samples drawn can be determined based on the accuracy demanded by the application before executing the approximation scheme. We start our study of finite sample behaviour by quantifying levels of approximation and bad samples associated with scenario approximations. For $\scAccuracy > 0$ define the set of ``bad'' samples of size $\sampleSize$ as those that give at least $\scAccuracy$-bad estimates of $\ropOptSolution$, that is, those for which $\sopOptSolution$ is at least $\scAccuracy$ away from $\ropOptSolution$:
\begin{equation}
\label{eq:bad set def orig}
\begin{aligned}
 \badSet{\sampleSize}{\scAccuracy} \Let & \set[\Big]{ \sample{i}{\sampleSize} \in \uncertainSet^{\sampleSize} \suchthat \inf_{\scOptElement \in \scOptSet} \sampledMargFn{\sampleSize}(\scOptElement) \leq \ropOptSolution - \scAccuracy }. \\
\end{aligned}
\end{equation}
Defining
\begin{equation}
\label{eq:bad set aux def}
\begin{aligned}
\badSetU{\sampleSize}{\scAccuracy} \Let & \set[\Big]{ \sample{i}{\sampleSize} \in \uncertainSet^{\sampleSize} \suchthat \text{ there exist } \scOptElement \in \scOptSet \text{ such that } \sampledMargFn{\sampleSize}(\scOptElement) \leq \ropOptSolution - \scAccuracy} \\
= & \bigcup_{\scOptElement \in \scOptSet} \set[\Big]{ \sample{i}{\sampleSize} \in \uncertainSet^{\sampleSize} \suchthat \sampledMargFn{\sampleSize}(\scOptElement) \leq \ropOptSolution - \scAccuracy},
\end{aligned}
\end{equation}
we immediately get the sandwich relation
\begin{equation}\label{eq:bad set new}
\badSetU{\sampleSize}{\scAccuracy} \subset \badSet{\sampleSize}{\scAccuracy} \subset \badSetU{\sampleSize}{\frac{\scAccuracy}{2}}. 
\end{equation}
We find it easier to work with $\badSetU{\sampleSize}{\scAccuracy}$ than $\badSet{\sampleSize}{\scAccuracy}$, and since these two sets are sandwiched between each other, estimates for the probability of one will naturally lead to estimates for probability of the other. Note that there is nothing special about the factor $2\scAccuracy$ in \eqref{eq:bad set new}, the relation holds with any factor strictly greater than 1, we chose $2$ for convenience. Since
\begin{equation*}
\begin{aligned}
\set[\Big] { \sample{i}{\sampleSize} \in \uncertainSet^{\sampleSize} \suchthat \sampledMargFn{\sampleSize}(\scOptElement) \leq \ropOptSolution - \scAccuracy} &= \set[\Big]{ \sample{i}{\sampleSize} \in \uncertainSet^{\sampleSize} \suchthat \max_{i = 1, .. ,\sampleSize} \scCost(\scOptElement, \uncertainParam_i) \leq \ropOptSolution - \scAccuracy } \\
 &= \bigcap_{i = 1}^{\sampleSize} \set[\Big]{ \sample{i}{\sampleSize} \in \uncertainSet^{\sampleSize} \suchthat \scCost(\scOptElement, \uncertainParam_i) \leq \ropOptSolution - \scAccuracy},
\end{aligned}
\end{equation*}
the set defined in \eqref{eq:bad set aux def} is, in fact,
\begin{equation}
	\label{eq: bad set def}
\begin{aligned}
\badSet{\sampleSize}{\scAccuracy} = \bigcup_{\scOptElement \in \scOptSet} \bigcap_{i = 1}^{\sampleSize} \set[\Big]{ \sample{i}{\sampleSize} \in \uncertainSet^{\sampleSize} \suchthat \scCost(\scOptElement, \uncertainParam_i) \leq \ropOptSolution - \scAccuracy}. 
\end{aligned}
\end{equation}

Since we already have the obvious bound \eqref{eq:obv bound} that $\sopOptSolution[\sampleSize] \leq \ropOptSolution$, if $ \sopOptSolution[\sampleSize] > \ropOptSolution -\scAccuracy$, then we also get $| \sopOptSolution - \ropOptSolution | < \scAccuracy$. In other words, if a sample lies outside the bad set $\badSet{\sampleSize}{\scAccuracy}$, then the difference between the approximate infimum and the actual infimum is less than than $\scAccuracy$. Naturally, it is desirable that the bad set is as `small' as possible.  

As mentioned earlier, in the real world a priori quantitative information regarding the finite sample behaviour of the scenario approach is crucial. We are especially interested in results that provide an upper bound on the probability of occurence of the bad set $\badSet{\sampleSize}{\scAccuracy}$
\begin{equation}\label{eq: bad set prob}
\badSetProb{\sampleSize}{\scAccuracy} \Let \prodProb{\probMeasure}{\sampleSize}(\badSet{\sampleSize}{\scAccuracy}),
\end{equation}
before the approximation procedure begins. Such quantitative bounds provide a PAC (``probably approximately correct'') type guarantee that the scenario approximation \eqref{eq:SOP} computed using $\sampleSize$ i.i.d. samples from $\uncertainSet$ has an accuracy $\scAccuracy$ with probability at least as much as $1 - \badSetProb{\sampleSize}{\scAccuracy}$. Given an accuracy level $\scAccuracy$ and a confidence level $\beta$, from such a bound we may determine the number of samples that are required to ensure that the approximate minimum is at most $\scAccuracy$ away from the actual minimum with a probability at least $1 - \beta$. In the subsequent sections we will prove such PAC type guarantees for a large class of nonconvex minmax optimization programs. 

\subsection{A technical look at the sampling probability}

Since the scenario approximation procedure involves i.i.d sampling according to an arbitrary probability distribution, it is not reasonable to expect that the sampled maximum approximates the supremum. If the probability distribution has large ``holes'' in regions of $\uncertainSet$ where the supremum is achieved, these regions are never explored and consequently the sampled maximum does not approximate the supremum. We refer the reader to \cite[\S 1]{ref:Ram-18} for a more detailed discussion on this matter. A more meaningful notion of a supremum that we expect the sampled maximum to approximate is that of the essential supremum:
\begin{equation}
\esssup_{\uncertainParam \in \uncertainSet} \scCost(\scOptElement, \uncertainParam) := \inf \set[\Big]{ z \in \R \ \suchthat \ \probMeasure \big( \set{ \uncertainParam \in \uncertainSet \ \suchthat \ \scCost(\scOptElement, \uncertainParam) \leq z }\big) = 1 }.
\end{equation}
While the supremum of a set of numbers is its least upper bound, the essential supremum is the least ``almost'' upper bound. If the probability distribution has ``holes'' in certain regions, the essential supremum avoids considering the value of the function in these regions automatically, and therefore avoiding these regions while trying to approximate the essential supremum does not create any technical issues. However, the supremum posesses a lot of nice properties that the essentail supremum does not posess, and in order to avoid having to deal with the additional complications brought by the essential supremum, we would like to ensure that the supremum and the essential supremum are one and the same. Fortunately, it turns out that the assumption of lower semicontinuity of $\scCost$ that we made at the start is sufficient for this, and to verify this statement, we start with a standard definition from measure theory.
\begin{definition}[{{\cite[Definition 2.1, p.\ 28]{ref:Par-67}}}]
The support of the measure $\probMeasure$ is
\begin{equation*}
\mSupp{\uncertainSet} := \set[\Big]{ \uncertainParam \in \uncertainSet \ \suchthat \ \probMeasure \big(N_{\uncertainParam}\big) > 0 \text{ for every open neighbourhood } N_\uncertainParam \text{ of } \uncertainParam }.
\end{equation*}
It can be shown that $\mSupp{\uncertainSet}$ is the smallest (w.r.t set inclusion) closed subset of $\uncertainSet$ that has $\probMeasure$-measure 1.
\end{definition}

\begin{lemma}
\label{l:ess sup equivalence}
Consider the problem \eqref{eq:ROP} with its associated data. If $\scCost : \scOptSet \times \uncertainSet \lra\R$ is lower semicontinuous, then for each $\scOptElement \in \scOptSet$
\begin{equation*}
	\esssup_{\uncertainParam \in \uncertainSet} \scCost(\scOptElement, \uncertainParam) = \sup_{\uncertainParam \in \mSupp{\uncertainSet}} \scCost(\scOptElement, \uncertainParam).
\end{equation*}
\end{lemma}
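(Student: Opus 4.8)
The plan is to prove the identity by establishing the two inequalities separately, with lower semicontinuity entering each direction through the behaviour of the sub- and superlevel sets of $\scCost(\scOptElement, \cdot)$. Throughout I fix $\scOptElement \in \scOptSet$ and write $E \Let \esssup_{\uncertainParam \in \uncertainSet} \scCost(\scOptElement, \uncertainParam)$ and $M \Let \sup_{\uncertainParam \in \mSupp{\uncertainSet}} \scCost(\scOptElement, \uncertainParam)$. Let $A \Let \set[\big]{ z \in \R \suchthat \probMeasure(\set{\uncertainParam \in \uncertainSet \suchthat \scCost(\scOptElement,\uncertainParam) \leq z}) = 1 }$ denote the set of ``almost upper bounds'', so that $E = \inf A$ by definition. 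The first observation I would record is that $A$ is upward closed: if $z \in A$ and $z' > z$, then the sublevel set for $z'$ contains that for $z$ and hence also has measure $1$. Consequently every real $z > E$ lies in $A$. The second observation is that lower semicontinuity of $\scCost(\scOptElement, \cdot)$ makes each sublevel set $\set{\uncertainParam \in \uncertainSet \suchthat \scCost(\scOptElement,\uncertainParam) \leq z}$ closed, hence Borel measurable, so every probability written above is well defined.

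For the inequality $E \leq M$, I may assume $M < +\infty$, since otherwise there is nothing to prove. By the definition of $M$ as a supremum over $\mSupp{\uncertainSet}$, the closed sublevel set $\set{\uncertainParam \in \uncertainSet \suchthat \scCost(\scOptElement,\uncertainParam) \leq M}$ contains the whole support $\mSupp{\uncertainSet}$. Since $\mSupp{\uncertainSet}$ carries full measure, this sublevel set has measure $1$, so $M \in A$ and therefore $E = \inf A \leq M$.

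The reverse inequality $M \leq E$ is where the defining property of the support does the real work. Arguing by contradiction, suppose $M > E$ and choose $\uncertainParam_0 \in \mSupp{\uncertainSet}$ together with a real $z$ satisfying $E < z < \scCost(\scOptElement, \uncertainParam_0)$. Here lower semicontinuity is used in its complementary form: the strict superlevel set $N \Let \set{\uncertainParam \in \uncertainSet \suchthat \scCost(\scOptElement,\uncertainParam) > z}$ is open, and it contains $\uncertainParam_0$. Thus $N$ is an open neighbourhood of a point of the support, so $\probMeasure(N) > 0$ by the definition of $\mSupp{\uncertainSet}$, which forces $\probMeasure(\set{\uncertainParam \in \uncertainSet \suchthat \scCost(\scOptElement,\uncertainParam) \leq z}) < 1$, i.e.\ $z \notin A$. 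But $z > E$ places $z$ in $A$ by the upward-closedness noted in the first paragraph --- a contradiction. Hence $M \leq E$, and combining the two inequalities yields $E = M$.

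I expect the only genuinely delicate point to be this second direction: one must invoke lower semicontinuity in the guise that strict superlevel sets are \emph{open} (rather than merely that sublevel sets are closed) in order to manufacture an open neighbourhood on which the measure is positive, and then connect this back to the infimum characterisation of the essential supremum through the upward-closedness of $A$. The first inequality together with all the measurability bookkeeping is routine once the closedness of sublevel sets has been noted.
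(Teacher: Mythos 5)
Your proof is correct. The inequality $\esssup_{\uncertainParam \in \uncertainSet} \scCost(\scOptElement, \uncertainParam) \leq \sup_{\uncertainParam \in \mSupp{\uncertainSet}} \scCost(\scOptElement, \uncertainParam)$ is handled exactly as in the paper: the sublevel set at the level of the right-hand side contains the support, which carries full measure. For the reverse inequality you take a genuinely different, dual route. The paper fixes $\delta > 0$, observes that the sublevel set $\set[\big]{ \uncertainParam \in \uncertainSet \suchthat \scCost(\scOptElement,\uncertainParam) \leq \esssup_{\uncertainParam \in \uncertainSet} \scCost(\scOptElement,\uncertainParam) + \delta }$ is closed (lower semicontinuity) and has measure $1$ (definition of the essential supremum), and then invokes the characterization of $\mSupp{\uncertainSet}$ as the \emph{smallest closed set of full measure} to place $\mSupp{\uncertainSet}$ inside that sublevel set, which gives the bound up to $\delta$. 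You instead use lower semicontinuity through the openness of the strict superlevel sets and appeal only to the \emph{defining} property of the support --- every open neighbourhood of a support point has positive measure --- to show, by contradiction, that no $z$ strictly below a value attained on the support can be an almost-sure upper bound. The two arguments are logically dual and of comparable length; yours has the mild advantage of resting on the primary definition of the support rather than on the minimality characterization (which the paper only asserts ``can be shown''), at the price of an indirect argument where a direct one suffices. The auxiliary bookkeeping in your write-up (upward-closedness of the set $A$ of almost upper bounds, measurability of the sublevel sets) is correct and complete.
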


A proof of this result is provided in Appendix \ref{app:proofs}. Lemma \ref{l:ess sup equivalence} says that lower semicontinuity of $\scCost$ ensures that the essential supremum is equal to the supremum on a certain subset of probability 1. For the sake of brevity of notation in the following discussion, we assume that $\mSupp{\uncertainSet} = \uncertainSet$; all the results that we derive below carry over to the situation when $\mSupp{\uncertainSet} \subsetneq \uncertainSet$. 
\begin{assumption}\label{ass:full support}
We stipulate that $\probMeasure$ is a fully supported probability measure, that is, 
\begin{equation*}
\mSupp{\uncertainSet} = \uncertainSet.
\end{equation*}
This is equivalent to stipulating that
\begin{equation*}
\probMeasure(U) > 0 \ \text{ for all open subsets } U \subset \uncertainSet.
\end{equation*}
\end{assumption}
Assumption \ref{ass:full support} ensures that it is not unreasonable to expect that the sampled maximum approximates the supremum, and this is the content of the next lemma: 
\begin{lemma}\label{l:ess sup}
Consider the problem \eqref{eq:ROP} with its associated data. If Assumption \ref{ass:full support} holds, then 
\begin{equation}
	\sup_{\uncertainParam \in \uncertainSet} \scCost(\scOptElement, \uncertainParam) = \esssup_{\uncertainParam \in \uncertainSet} \scCost(\scOptElement, \uncertainParam).
\end{equation}
\end{lemma}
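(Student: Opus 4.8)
The plan is to establish the two inequalities separately, fixing an arbitrary $\scOptElement \in \scOptSet$ throughout and abbreviating $M \Let \sup_{\uncertainParam \in \uncertainSet} \scCost(\scOptElement, \uncertainParam)$ and $S \Let \esssup_{\uncertainParam \in \uncertainSet} \scCost(\scOptElement, \uncertainParam)$.

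The inequality $S \leq M$ is immediate and uses neither lower semicontinuity nor the full-support hypothesis: since $\scCost(\scOptElement, \uncertainParam) \leq M$ for every $\uncertainParam \in \uncertainSet$, we have $\probMeasure\big(\set{\uncertainParam \in \uncertainSet \suchthat \scCost(\scOptElement, \uncertainParam) \leq M}\big) = 1$, so $M$ belongs to the set over which the infimum defining the essential supremum is taken, whence $S \leq M$.

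The substantive direction is $M \leq S$, and this is where Assumption \ref{ass:full support} and lower semicontinuity enter. I would show that no real number $z < M$ is an \emph{almost} upper bound, i.e.\ that $\probMeasure\big(\set{\uncertainParam \in \uncertainSet \suchthat \scCost(\scOptElement, \uncertainParam) \leq z}\big) < 1$ for every such $z$. Indeed, $z < M = \sup$ yields some $\uncertainParam_0 \in \uncertainSet$ with $\scCost(\scOptElement, \uncertainParam_0) > z$, so the superlevel set $\set{\uncertainParam \in \uncertainSet \suchthat \scCost(\scOptElement, \uncertainParam) > z}$ is nonempty; lower semicontinuity of $\scCost(\scOptElement, \cdot)$ makes this set open, being the complement of the closed sublevel set $\set{\uncertainParam \in \uncertainSet \suchthat \scCost(\scOptElement, \uncertainParam) \leq z}$, and Assumption \ref{ass:full support} then forces it to have strictly positive $\probMeasure$-measure. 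Consequently no $z < M$ lies in $\set{z \in \R \suchthat \probMeasure(\set{\uncertainParam \in \uncertainSet \suchthat \scCost(\scOptElement, \uncertainParam) \leq z}) = 1}$, so the infimum $S$ of this set satisfies $S \geq M$. Combining the two inequalities gives $M = S$.

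The argument is essentially routine once the openness-plus-positive-measure mechanism is in place; the only point demanding care is that the infimum defining $\esssup$ need not be attained, so I must reason with all $z < M$ rather than with $z = S$ directly. Alternatively, the statement is an immediate corollary of Lemma \ref{l:ess sup equivalence}: substituting $\mSupp{\uncertainSet} = \uncertainSet$, which is precisely Assumption \ref{ass:full support}, into its conclusion yields the claim. I would record the short direct argument above and simply note that it reproves the relevant special case of Lemma \ref{l:ess sup equivalence}.
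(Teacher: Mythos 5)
Your argument is correct. Note, however, that the paper itself never writes out a separate proof of this lemma: it is stated as an immediate consequence of Lemma \ref{l:ess sup equivalence}, since under Assumption \ref{ass:full support} one has $\mSupp{\uncertainSet} = \uncertainSet$ and the right-hand side of that lemma's conclusion becomes $\sup_{\uncertainParam \in \uncertainSet} \scCost(\scOptElement, \uncertainParam)$ — exactly the corollary route you mention at the end. Your primary, direct argument is a genuinely self-contained alternative, and it is in fact a streamlining of the appendix proof of Lemma \ref{l:ess sup equivalence} specialized to the full-support case: where that proof invokes the minimality of $\mSupp{\uncertainSet}$ among closed sets of measure one together with closedness of the sublevel sets $\set{\uncertainParam \in \uncertainSet \suchthat \scCost(\scOptElement,\uncertainParam) \le z}$, you use the equivalent formulation of full support (every nonempty open set has positive measure) applied to the open superlevel set $\set{\uncertainParam \in \uncertainSet \suchthat \scCost(\scOptElement,\uncertainParam) > z}$ for $z < M$. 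The two mechanisms are dual to one another; your version buys a shorter, more elementary proof of the special case at hand (and handles $M = +\infty$ without extra effort, since then every real $z$ fails to be an almost-sure upper bound), while the paper's route buys the more general statement on $\mSupp{\uncertainSet}$, which it needs in order to discuss the case $\mSupp{\uncertainSet} \subsetneq \uncertainSet$ before Assumption \ref{ass:full support} is imposed. Your point about reasoning with all $z < M$ rather than with $z = S$ is well taken, and both inequalities are handled correctly.
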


All the assumptions made until this point will remain in force throughout the article, unless specifically mentioned otherwise; for the convenience of the reader, we recollect them here. 

	\begin{itemize}[label=\(\circ\), leftmargin=*]
		\item $\scOptSet$ is a subset of $\R^{\scOptDim}$ and $\uncertainSet$ is a metric space.
		\item $\scCost : \scOptSet \times \uncertainSet \lra\R$ is a lower semicontinous function. 
		\item $\probMeasure$ is a fully supported probability measure on $\uncertainSet$.
	\end{itemize}

\subsection{Prior work and contributions}

The scenario approach has been studied extensively in the literature in the particular case where $\scOptSet$ is a convex set and $\scCost(\cdot, \uncertainParam):\scOptSet \lra\R$ is a convex function for each $\uncertainParam \in \uncertainSet$. We will henceforth refer to the problem as a \emph{random convex program}. We review two recent representative results related to scenario approximations of random convex programs: one on consistency and the other on finite sample behaviour of the scenario approach, and compare the contributions of this article with the two results. We point out that both of these results rely crucially on the results established in \cite{ref:CalCam-05, ref:CalCam-06,ref:CamGar-08}. 

The first result from \cite{ref:Ram-18} establishes the consistency of the scenario approach for random convex programs, under an additional stipulation of an appropriate notion of coercivity on the class of functions $\scCost(\cdot, \uncertainParam):\scOptSet \lra \R$. Recall that if $\genMetSpace$ is a metric space a function $F: \genMetSpace \lra\R$ is \emph{weakly coercive} if for each $t \geq 0$ there exists a compact set $C_t \subset \genMetSpace$ such that the $t$-sublevel set of $F$ is contained in $C_t$, that is, $\set[\big] {x \in \genMetSpace \suchthat F(x) \leq t} \subset C_t$. In particular, if $\genMetSpace$ is compact, every function $F$ is weakly coercive.

\begin{theorem}[{{\cite[Theorem 14 on p.\ 154]{ref:Ram-18}}}]
\label{th: consistency convex}
Consider the problem \eqref{eq:ROP} with its associated data. Suppose there exists an $\bar{N} \in \N$ such that 
\begin{equation}
\label{eq: eq coercivity ramponi}
\prodProb{\probMeasure}{\bar{N}} \left( \set[\Big]{ \sample{i}{\bar{N}} \suchthat \sampledMargFn{\bar{N}} \text{ is weakly coercive} } \right) > 0.
\end{equation}
In addition, if $\scCost(\cdot, \uncertainParam):\scOptSet \lra\R$ is convex for all $\uncertainParam \in \uncertainSet$, the scenario approach is consistent in the sense that
\begin{equation*}
\prodProb{\probMeasure}{\infty}\left( \set[\Big]{ \sample{i}{+\infty} \suchthat \lim_{\sampleSize \to +\infty} \sopOptSolution[\sampleSize] = \ropOptSolution } \right) = 0.
\end{equation*}
\end{theorem}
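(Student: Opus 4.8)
Before giving a plan I should flag what is evidently a typographical slip in the displayed conclusion: as typeset it reads $\prodProb{\probMeasure}{\infty}(\cdots)=0$, whereas the theorem, its label, and the word ``consistent'' all require the convergence event to have probability $1$. The slip is transparent: since $\sampledMargFn{\sampleSize}(\scOptElement)=\max_{i=1,\dots,\sampleSize}\scCost(\scOptElement,\uncertainParam_i)$ is nondecreasing in $\sampleSize$, its infimum $\sopOptSolution[\sampleSize]$ is a nondecreasing sequence bounded above by $\ropOptSolution$ through \eqref{eq:obv bound}, so $\lim_{\sampleSize\to+\infty}\sopOptSolution[\sampleSize]$ exists almost surely and never exceeds $\ropOptSolution$. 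Consistency is precisely the assertion that this monotone limit attains its a priori ceiling $\ropOptSolution$ with full probability, and a value of $0$ is impossible under these hypotheses; I therefore sketch a proof that $\prodProb{\probMeasure}{\infty}(\lim_{\sampleSize\to+\infty}\sopOptSolution[\sampleSize]=\ropOptSolution)=1$.

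By the monotone reduction it suffices, for each rational $\scAccuracy>0$, to show $\prodProb{\probMeasure}{\infty}(\lim_{\sampleSize\to+\infty}\sopOptSolution[\sampleSize]\le\ropOptSolution-\scAccuracy)=0$ and to union over $\scAccuracy$. First I would propagate the coercivity hypothesis \eqref{eq: eq coercivity ramponi} to all large horizons: partitioning $\N$ into consecutive blocks of length $\bar{N}$, each block independently produces a weakly coercive $\bar{N}$-sample marginal with the same positive probability $p$, so almost surely at least one (indeed infinitely many) block does, the probability that none does being $\prod(1-p)=0$. Because $\sampledMargFn{\sampleSize}\ge\max_{i\in B}\scCost(\cdot,\uncertainParam_i)$ once a block $B$ lies in $\{1,\dots,\sampleSize\}$, weak coercivity of a single block forces weak coercivity of $\sampledMargFn{\sampleSize}$ for every larger $\sampleSize$; hence there is an almost surely finite random $M$ such that, for all $\sampleSize\ge M$, $\sampledMargFn{\sampleSize}$ is weakly coercive and its sublevel sets are trapped in one fixed compact set. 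Each scenario program then admits (near-)minimisers $\scOptElement_\sampleSize^\ast$ confined to that compact set.

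On the intersection of the undershoot event with the full-measure coercivity event, monotonicity gives $\sampledMargFn{\sampleSize}(\scOptElement_\sampleSize^\ast)\le\ropOptSolution-\scAccuracy/2$ for all $\sampleSize\ge M$, and these points lie in one compact set, so a subsequence converges, $\scOptElement_{\sampleSize_k}^\ast\to\scOptElement_\ast$. For each fixed index $i$ and all large $k$ one has $\scCost(\scOptElement_{\sampleSize_k}^\ast,\uncertainParam_i)\le\sampledMargFn{\sampleSize_k}(\scOptElement_{\sampleSize_k}^\ast)\le\ropOptSolution-\scAccuracy/2$, so lower semicontinuity of $\scCost(\cdot,\uncertainParam_i)$ in the decision variable yields $\scCost(\scOptElement_\ast,\uncertainParam_i)\le\ropOptSolution-\scAccuracy/2$ for every $i$. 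It then remains to upgrade this from the sampled $\uncertainParam_i$ to all of $\uncertainSet$, that is, to deduce $\marginalFunction(\scOptElement_\ast)=\sup_{\uncertainParam\in\uncertainSet}\scCost(\scOptElement_\ast,\uncertainParam)\le\ropOptSolution-\scAccuracy/2$; together with Lemma \ref{l:ess sup} and $\ropOptSolution=\inf_{\scOptElement}\marginalFunction(\scOptElement)\le\marginalFunction(\scOptElement_\ast)$ this forces the contradiction $\ropOptSolution\le\ropOptSolution-\scAccuracy/2$, so the undershoot event is null.

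The delicate step, and the one where the convexity of each $\scCost(\cdot,\uncertainParam)$ is indispensable, is exactly this upgrade from finitely many sampled constraints to robust feasibility of the limit $\scOptElement_\ast$. Full support (Assumption \ref{ass:full support}) and lower semicontinuity do \emph{not} by themselves close the gap, because $\scOptElement_\ast$ is extracted from the sample and is statistically dependent on it, so one cannot simply assert that the open set $\set[\big]{\uncertainParam\suchthat\scCost(\scOptElement_\ast,\uncertainParam)>\ropOptSolution-\scAccuracy/2}$ is hit by some $\uncertainParam_i$. Overcoming this dependence is precisely the role of the classical scenario theory of \cite{ref:CalCam-05,ref:CamGar-08} on which \cite{ref:Ram-18} relies: for convex programs the residual robust infeasibility of the scenario optimiser obeys a \emph{distribution-free} tail bound controlled by the finite Helly dimension ($\le\scOptDim+1$), which is summable in $\sampleSize$ and hence forces, via Borel--Cantelli and uniformly in the realisation, $\marginalFunction(\scOptElement_\sampleSize^\ast)-\sopOptSolution[\sampleSize]\to 0$ almost surely. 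Convexity, together with closedness of $\scOptSet$, is likewise what guarantees that the minimisers $\scOptElement_\sampleSize^\ast$ exist and that the compactness extraction keeps $\scOptElement_\ast$ inside $\scOptSet$. Establishing this sample-independent, uniform control is the crux; everything else is soft analysis resting on monotonicity, coercivity, and lower semicontinuity.
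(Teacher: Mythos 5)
First, a point of order: the paper does not prove this theorem at all --- it is quoted verbatim from \cite[Theorem 14]{ref:Ram-18} as background, so there is no internal proof to compare against; the comparison below is with Ramponi's actual argument. Your observation that the displayed conclusion ``\(=0\)'' is a typo for ``\(=1\)'' is correct (it contradicts the definition of consistency in (G1), and your monotonicity argument pins this down). Your soft-analysis scaffolding is also sound and close in spirit to what Ramponi does: the block-partition argument showing that \eqref{eq: eq coercivity ramponi} propagates, via independence and the second Borel--Cantelli lemma, to almost-sure eventual weak coercivity of \(\sampledMargFn{\sampleSize}\) (using monotonicity of the max over supersets of a coercive block) is exactly the right mechanism, as is the localization of near-minimizers \(\scOptElement_\sampleSize^\ast\) to a fixed compact set and the extraction of a limit \(\scOptElement_\ast\) satisfying \(\scCost(\scOptElement_\ast,\uncertainParam_i)\le\ropOptSolution-\scAccuracy/2\) for every sampled \(\uncertainParam_i\).

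The genuine gap is in your final step. You correctly identify the crux --- the limit point \(\scOptElement_\ast\) depends on the sample, so full support alone cannot rule out that every \(\uncertainParam_i\) misses the open set \(\set{\uncertainParam \suchthat \scCost(\scOptElement_\ast,\uncertainParam)>\ropOptSolution-\scAccuracy/2}\) --- but the mechanism you invoke to close it does not work as stated. The Campi--Garatti bound controls the \emph{violation probability} \(\probMeasure\bigl(\set{\uncertainParam \suchthat \scCost(\scOptElement_\sampleSize^\ast,\uncertainParam)>\sampledMargFn{\sampleSize}(\scOptElement_\sampleSize^\ast)}\bigr)\), and summability plus Borel--Cantelli indeed drives this probability to zero almost surely; but a vanishing violation \emph{probability} does not imply a vanishing \emph{value gap} \(\marginalFunction(\scOptElement_\sampleSize^\ast)-\sopOptSolution[\sampleSize]\to 0\). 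Converting measure into value is precisely the nontrivial currency exchange that this very paper formalizes through \(\tailProb\) and \(\infTailProb\): Theorem \ref{th:weak nec} exhibits problems where violation probabilities vanish yet the value gap stays bounded away from zero (because \(\infTailProb(\scAccuracy)=0\)), and the whole ULB apparatus of \cite{ref:EsfSutLyg-15} exists exactly to effect this conversion under extra hypotheses. So the sentence ``which is summable in \(\sampleSize\) and hence forces \(\ldots\) \(\marginalFunction(\scOptElement_\sampleSize^\ast)-\sopOptSolution[\sampleSize]\to 0\) almost surely'' asserts the theorem rather than proving it. Ramponi's proof deploys convexity in a different and essentially unavoidable way: for each fixed \(\scOptElement\), full support and lower semicontinuity give \(\sampledMargFn{\sampleSize}(\scOptElement)\nearrow\marginalFunction(\scOptElement)\) almost surely (Lemma \ref{l:ess sup}); countability lets one fix a single null set off which this holds simultaneously on a countable dense set of decision points; and convexity of the monotone sequence \((\sampledMargFn{\sampleSize})_{\sampleSize}\) upgrades pointwise convergence on a dense set to epigraphical (locally uniform) convergence, which together with the equi-coercivity supplied by your block argument yields convergence of the infima. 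That epi-convergence step --- not a scenario feasibility bound --- is what neutralizes the dependence of \(\scOptElement_\ast\) on the sample, and it is the ingredient your sketch is missing.
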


Under the additional Assumption of \eqref{eq: eq coercivity ramponi}, Theorem \ref{th: consistency convex} establishes the consistency of the scenario approach for random convex programs. When the set $\scOptSet$ itself is compact \eqref{eq: eq coercivity ramponi} holds trivially since any function on a compact set is weakly coercive. If $\scOptSet$ is non-compact \eqref{eq: eq coercivity ramponi} may fail to hold, and consequently the consistency of the scenario approach may also be jeopardized. We study this situation in detail, and the \emph{\textbf{first main contribution}} of the article will be identifying an obstruction to consistency of the scenario approach when $\scOptSet$ is non-compact, that is, a condition that guarantees that the scenario approach will not be consistent.

We now review the main result from \cite{ref:EsfSutLyg-15} that establishes finite sample guarantees for the scenario approach applied to random convex programs.

\begin{theorem}[{{\cite[Theorem 14 on p.\ 5]{ref:EsfSutLyg-15}}}]
\label{th:convex sop ftg}
The \emph{tail probability for worst case violation} is the function $\tailProbWors : \scOptSet \times \Rpos \lra [0, 1]$ defined by
\begin{equation}
\label{eq:tail prob worse}
\tailProbWors(\scOptElement, \scAccuracy) = \probMeasure \left( \set[\Big] { \uncertainParam \in \uncertainSet \suchthat \scCost(\scOptElement, \uncertainParam) > \marginalFunction(\scOptElement) - \scAccuracy  } \right).
\end{equation}
Moreover, let
\begin{equation}
\label{eq:inf tail prob worse}
 \infTailProbWors(\scAccuracy) = \inf_{\scOptElement \in \scOptSet} \tailProbWors(\scOptElement, \scAccuracy).
\end{equation}
The function $\unifLowBound : [0, 1] \lra \Rpos$ is called a \emph{uniform level set bound (ULB) of $\infTailProbWors$} if for every $\scAccuracy \in [0, 1]$,
\begin{equation}
\label{eq: ULB}
\unifLowBound(\scAccuracy) \geq \sup \set[\Big] { \delta \geq 0 \suchthat \infTailProbWors(\scAccuracy) \leq \delta}.
\end{equation} 
Define
\begin{equation}
\label{eq:def N}
N(\scAccuracy, \beta) \Let \min \set[\bigg] { N \in \N \suchthat \sum_{i = 0}^{\scOptDim - 1} {N \choose i} \scAccuracy^i (1-\scAccuracy)^{N-i} \leq \beta }.
\end{equation}
Given a ULB $h$ and numbers $\scAccuracy, \beta \in\: ]0, 1]$, for all $N \geq N(\scAccuracy, \beta)$ we have
\begin{equation}
\prodProb{\probMeasure}{N} \left( \set[\Big]{ \sample{i}{N} \in \uncertainSet^{N} \suchthat \ropOptSolution - \sopOptSolution \leq h(\scAccuracy) } \right) \geq 1-\beta.
\end{equation}

\end{theorem}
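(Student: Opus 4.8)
The plan is to recast the minmax problem as a standard random convex program through the epigraph trick, apply the Calafiore--Campi--Garatti scenario bound to that reformulation, and then convert the resulting guarantee on the \emph{violation probability} of the scenario optimizer into the desired guarantee on the \emph{objective gap} $\ropOptSolution - \sopOptSolution[N]$ by means of the ULB $\unifLowBound$; this is the route taken in \cite{ref:EsfSutLyg-15}. The probabilistic content is supplied entirely by the scenario bound, whereas the passage from violation probability to objective gap is deterministic.

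First I would introduce an epigraph variable $t \in \R$ and rewrite \eqref{eq:ROP} and \eqref{eq:SOP} as the pair of convex programs
\begin{align*}
\ropOptSolution &= \inf_{(\scOptElement, t)} \set[\big]{ t \suchthat \scCost(\scOptElement, \uncertainParam) \leq t \text{ for all } \uncertainParam \in \uncertainSet}, \\
\sopOptSolution[N] &= \inf_{(\scOptElement, t)} \set[\big]{ t \suchthat \scCost(\scOptElement, \uncertainParam_i) \leq t, \ i = 1, \dots, N }.
\end{align*}
Since $\scCost(\cdot, \uncertainParam)$ is convex and $t$ enters the constraint $\scCost(\scOptElement, \uncertainParam) \leq t$ linearly, every constraint set is convex, the objective $t$ is linear, and the decision vector $(\scOptElement, t)$ ranges over a convex subset of $\R^{\scOptDim + 1}$; this is exactly a random convex program to which the results of \cite{ref:CalCam-05, ref:CalCam-06, ref:CamGar-08} apply. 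Writing $(\scOptElement_N^\star, t_N^\star)$ for a scenario optimizer, we have $t_N^\star = \sopOptSolution[N] = \sampledMargFn{N}(\scOptElement_N^\star)$, and a new sample $\uncertainParam$ violates the corresponding constraint precisely when $\scCost(\scOptElement_N^\star, \uncertainParam) > \sopOptSolution[N]$. The scenario bound then asserts that, as soon as $N \geq N(\scAccuracy, \beta)$ with $N(\scAccuracy, \beta)$ defined by \eqref{eq:def N}, the violation probability of this optimizer exceeds $\scAccuracy$ only on an event of probability at most $\beta$:
\begin{equation*}
\prodProb{\probMeasure}{N}\left( \set[\Big]{ \sample{i}{N} \suchthat \probMeasure\big( \set{\uncertainParam \in \uncertainSet \suchthat \scCost(\scOptElement_N^\star, \uncertainParam) > \sopOptSolution[N]} \big) > \scAccuracy } \right) \leq \beta.
\end{equation*}

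It then remains to establish the deterministic implication that, whenever the above violation probability is at most $\scAccuracy$, one has $\ropOptSolution - \sopOptSolution[N] \leq \unifLowBound(\scAccuracy)$. Set $\gamma \Let \marginalFunction(\scOptElement_N^\star) - \sopOptSolution[N] \geq 0$, the gap at the scenario optimizer between its true worst-case value and its scenario value. By the definition \eqref{eq:tail prob worse} of $\tailProbWors$,
\begin{equation*}
\probMeasure\big( \set{\uncertainParam \suchthat \scCost(\scOptElement_N^\star, \uncertainParam) > \sopOptSolution[N]} \big) = \probMeasure\big( \set{\uncertainParam \suchthat \scCost(\scOptElement_N^\star, \uncertainParam) > \marginalFunction(\scOptElement_N^\star) - \gamma} \big) = \tailProbWors(\scOptElement_N^\star, \gamma) \geq \infTailProbWors(\gamma),
\end{equation*}
so the assumption forces $\infTailProbWors(\gamma) \leq \scAccuracy$; hence $\gamma \in \set[\big]{ \delta \geq 0 \suchthat \infTailProbWors(\delta) \leq \scAccuracy}$ and \eqref{eq: ULB} gives $\gamma \leq \unifLowBound(\scAccuracy)$. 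Since $\ropOptSolution = \inf_{\scOptElement \in \scOptSet} \marginalFunction(\scOptElement) \leq \marginalFunction(\scOptElement_N^\star)$, we conclude $\ropOptSolution - \sopOptSolution[N] \leq \marginalFunction(\scOptElement_N^\star) - \sopOptSolution[N] = \gamma \leq \unifLowBound(\scAccuracy)$. Combining the two steps, the complement of the good event $\set{ \ropOptSolution - \sopOptSolution[N] \leq \unifLowBound(\scAccuracy)}$ is contained in the high-violation event and therefore has probability at most $\beta$, which is the assertion.

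The step I expect to be the main obstacle is the clean invocation of the scenario bound for the reformulated program, since \cite{ref:CalCam-06, ref:CamGar-08} deliver the binomial tail only under hypotheses that must be checked here: almost-sure feasibility and attainment of the optimizer $(\scOptElement_N^\star, t_N^\star)$ and, in the basic formulations, its uniqueness, which has to be secured either through a nondegeneracy/tie-breaking convention or by arguing with near-optimizers and passing to a limit. One must also track the number of decision variables with care, as the epigraph reformulation lives in $\R^{\scOptDim + 1}$, so the upper index of the binomial sum in \eqref{eq:def N} has to be reconciled with the variable count of the reformulated problem. By contrast, the translation step is elementary once $\gamma$ is introduced, the only delicate point being that the strict inequality in the violation event matches the strict inequality in the definition \eqref{eq:tail prob worse} of $\tailProbWors$.
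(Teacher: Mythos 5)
The paper never proves this theorem: it is quoted as background from \cite{ref:EsfSutLyg-15} (and rests on \cite{ref:CamGar-08}), so there is no in-paper proof to compare against. Your reconstruction follows the route actually taken in that reference --- epigraph reformulation into a random convex program in $\R^{\scOptDim+1}$, the Campi--Garatti binomial tail bound on the violation probability of the scenario optimizer, and a deterministic translation of ``small violation probability'' into ``small objective gap'' via the ULB --- and the translation step (introducing $\gamma = \marginalFunction(\scOptElement_N^\star) - \sopOptSolution[N]$, identifying the violation probability with $\tailProbWors(\scOptElement_N^\star,\gamma)$, and bounding $\gamma$ by $\unifLowBound(\scAccuracy)$) is carried out correctly. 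Two remarks. First, the ULB condition as transcribed in \eqref{eq: ULB} reads $\unifLowBound(\scAccuracy) \geq \sup\{\delta \geq 0 : \infTailProbWors(\scAccuracy) \leq \delta\}$, whose right-hand side is $+\infty$ and would make the theorem vacuous; your argument silently substitutes the intended pseudo-inverse $\sup\{\delta \geq 0 : \infTailProbWors(\delta) \leq \scAccuracy\}$, which is the correct reading and the one the authors themselves rely on in Remark \ref{rem: pos alt form}. Second, the points you flag as obstacles --- attainment/uniqueness (or a tie-breaking rule) for the scenario optimizer, nondegeneracy, and the off-by-one in the upper limit of the binomial sum caused by the extra epigraph variable --- are precisely the hypotheses and bookkeeping that \cite{ref:CamGar-08} and \cite{ref:EsfSutLyg-15} handle; since the theorem is itself a citation, importing those from the source rather than re-deriving them is appropriate, but a self-contained proof would have to resolve them explicitly.
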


Theorem \ref{th:convex sop ftg} provides a guarantee that if $N(\scAccuracy, \beta)$ number of points are sampled in an i.i.d fashion from $\uncertainSet$ and the corresponding scenario approximation problem is solved to obtain an approximate minimum, then one can say with confidence $1-\beta$ that the approximate minimum $\sopOptSolution[N(\scAccuracy, \beta)]$ is at most $h(\scAccuracy)$ away from the true minimum $\ropOptSolution$. Note that the guarantee is a priori: one does not need any information related to the actual samples drawn $\sample{i}{\sampleSize}$ in order to compute $N(\scAccuracy, \beta)$, and consequently, one can use Theorem \ref{th:convex sop ftg} to determine the number of samples required to be drawn in order to obtain a solution of the given accuracy fixed at the beginning of the optimization procedure. One of the crucial ingredients in the proof of Theorem \ref{th:convex sop ftg} is a result from \cite{ref:CamGar-08} which is valid only for random convex programs. In the light of recent extensions of \cite{ref:CamGar-08} to the nonconvex case in \cite{ref:CamGarRam-18}, one can extend some results of \cite{ref:EsfSutLyg-15}, including Theorem \ref{th:convex sop ftg}, to nonconvex robust optimization problems. However, the results of \cite{ref:CamGarRam-18} in the nonconvex regime are of an \emph{a posteriori} nature, meaning that the guarantees given depend on the sample $\sample{i}{\sampleSize}$ drawn, and the extension of Theorem \ref{th:convex sop ftg} to the nonconvex case via that route inherits this same a posteriori property. This means that one cannot determine the number of samples that give an approximate solution of desired accuracy before the approximation procedure begins. However, once a sample $\sample{i}{\sampleSize}$ is drawn and the correspoding scenario approximation is found, then one can find the accuracy of the computed approximate solution. In other words, one can only assess the quality of a scenario approximate solution \emph{after} the solution is computed. In contrast, the \emph{\textbf{second main contribution}} of the present article is a methodology to establish a priori PAC type finite sample guarantees similar to Theorem \ref{th:convex sop ftg} that is applicable to scenario approximations of a large class of nonconvex minmax optimization problems.

\subsection{Numerical experiments in high dimensions}


We devote this subsection to examine in detail, with the aid of numerical experiments, a simple minmax optimization problem and the quality of scenario approximations of it. Recall that for a given vector $v \in \R^{n}$ the quantity $\infnorm{v}$ denotes its infinity norm defined by $\infnorm{v} = \max\limits_{i = 1,\ldots, n} |v_i|.$ Consider the optimization problem:
\begin{equation}\label{eq:num exp}
\ropOptSolution = \inf_{\scOptElement \in \left[0, 1\right]} \ \sup_{\uncertainParam \in \left[-1, 1\right]^{\uncertainDim}} \left( \scOptElement\infnorm{\uncertainParam} - \infnorm{\uncertainParam}^2 \right).
\end{equation}

\begin{figure}[t]
\centering
\includegraphics[height=0.8\textwidth, width=0.9\textwidth]{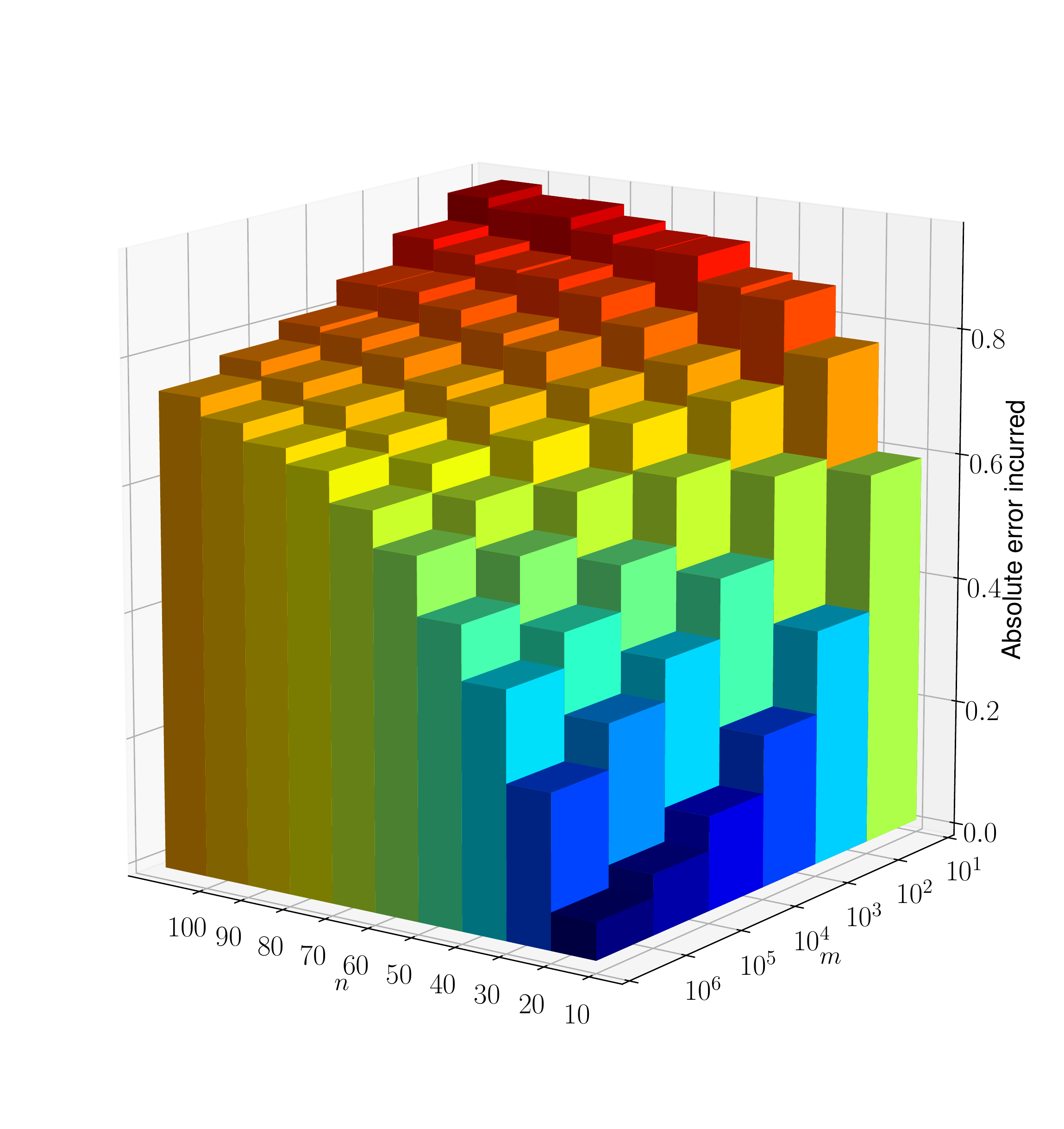}
\caption{Variation of the error in scenario approximations of the problem \eqref{eq:num exp} with respect to the dimension ($\uncertainDim$) of the uncertainty set and number ($\sampleSize$) of i.i.d samples drawn from the uncertainty set. The i.i.d samples were drawn according to the uniform distribution on $\uncertainSet = \left[-1, 1 \right]^{\uncertainDim}$. The numbers reported here correspond to the average error of the scenario approximations over 25 sets of samples of length $\sampleSize$ from $\uncertainSet = \left[-1, 1 \right]^{\uncertainDim}$.}
\label{fig:num exp unif 3d}
\end{figure}

\begin{figure}[t]
\centering
\includegraphics[width=10cm]{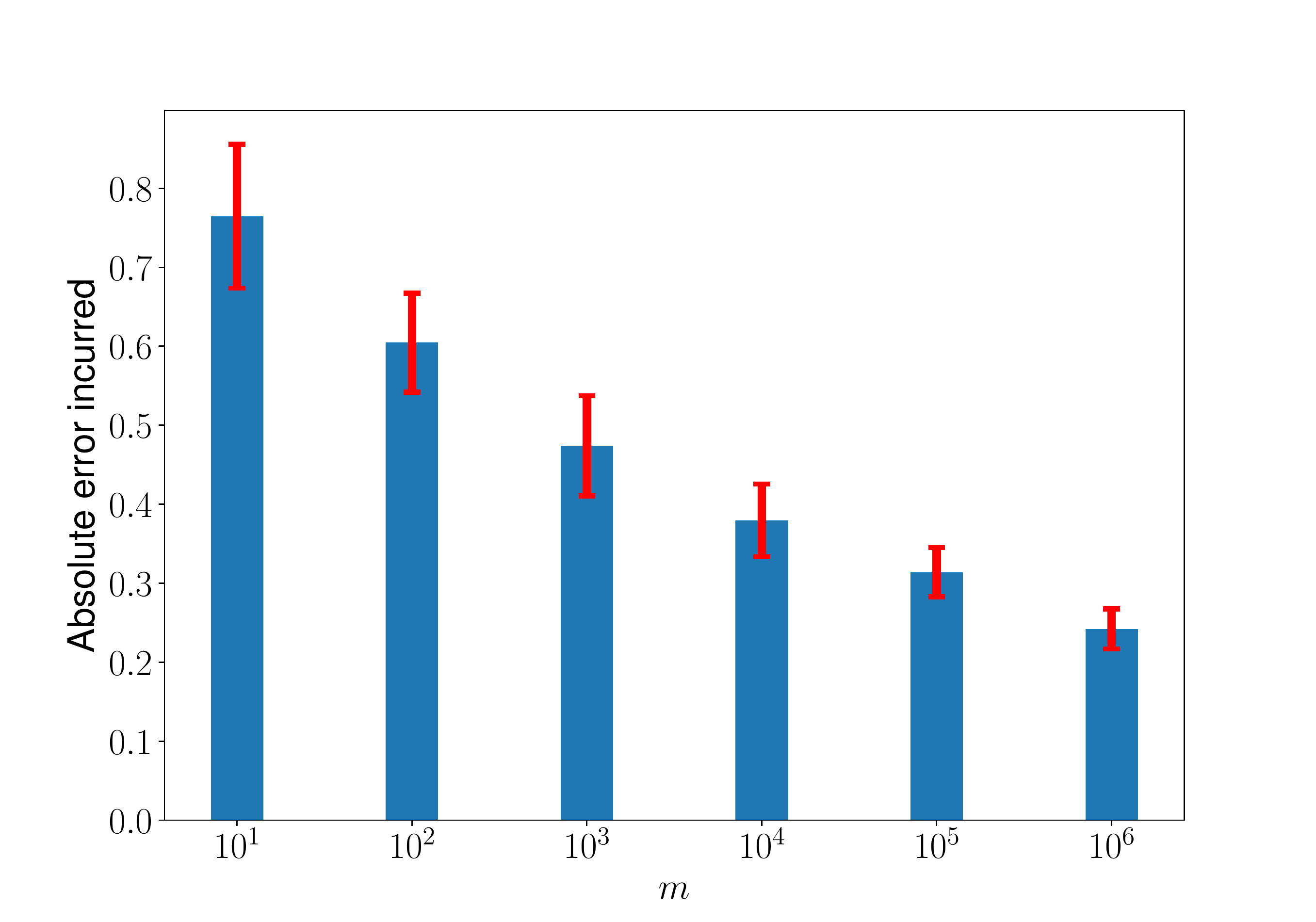}
\caption{Variation of the error in scenario approximations of the problem \eqref{eq:num exp} with respect to the number ($\sampleSize$) of i.i.d samples drawn from the uncertainty set when the dimension ($\uncertainDim$) of the uncertainty set is fixed at 20. The i.i.d samples were drawn according to the uniform distribution on $\uncertainSet = \left[-1,1 \right]^{\uncertainDim}$. The numbers reported here correspond to the average error of the scenario approximations over 25 sets of samples of length $\sampleSize$ from $\uncertainSet = \left[-1, 1\right]^{\uncertainDim}$.}
\label{fig:num exp unif 2d}
\end{figure}

In the language of the problem \eqref{eq:ROP}, here we have chosen $\scOptSet = \left[0, 1\right]$, $\uncertainSet = \left[-1, 1\right]^{\uncertainDim}$, and the continuous function $\scCost(\scOptElement, \uncertainParam) = \scOptElement\infnorm{\uncertainParam} - \infnorm{\uncertainParam}^2.$ Observe that for each $\uncertainParam \in \uncertainSet$, $\scCost(\cdot, \uncertainParam)$ is a convex function on the convex set $\scOptSet$. In other words, \eqref{eq:num exp} is a random convex program. Moreover the set $\scOptSet$ is compact and therefore \eqref{eq:num exp} satisfies all the conditions of Theorem \ref{th: consistency convex}, and the latter guarantees that the scenario approximations will almost surely converge to $\ropOptSolution$. 

To study the finite sample behaviour of scenario approximations of \eqref{eq:num exp}, we first compute the optimal value $\ropOptSolution$. This can be done by observing that
\begin{equation}
\sup_{\uncertainParam \in \left[-1, 1\right]^{\uncertainDim}} \left( \scOptElement\infnorm{\uncertainParam} - \infnorm{\uncertainParam}^2 \right) = \sup_{\infnorm{\uncertainParam} \in \left[-1, 1\right]} \left( \scOptElement\infnorm{\uncertainParam} - \infnorm{\uncertainParam}^2 \right) = \frac{\scOptElement^2}{4}.
\end{equation}
Consequently,
\begin{equation}
\ropOptSolution = \inf_{\scOptElement \in \left[0, 1\right]} \ \sup_{\uncertainParam \in \left[-1, 1\right]^{\uncertainDim}} \left( \scOptElement\infnorm{\uncertainParam} - \infnorm{\uncertainParam}^2 \right) = \inf_{\scOptElement \in \left[0, 1\right]} \ \frac{\scOptElement^2}{4} = 0.
\end{equation}
Since we have the optimal value $\ropOptSolution$ in \eqref{eq:num exp} and the scenario approximate solution $\sopOptSolution[\sampleSize]$ can be computed numerically on a computer, we can compute the error associated with the scenario approximations of \eqref{eq:num exp}. In Figure \ref{fig:num exp unif 3d} we present the results of our numerical experiments that give the error in the scenario approximation \eqref{eq:SOP} of \eqref{eq:num exp} and its variation with the dimension $\uncertainDim$ of the uncertainty set and the number $\sampleSize$ of samples. We sampled independently from the uniform distribution on $\uncertainSet = \left[-1, 1\right]^{\uncertainDim}$ to obtain these scenario approximations. The error shown in the figure for each value of $\sampleSize$ and $\uncertainDim$ was computed by taking the average error of the scenario approximations over 25 sets of samples of length $\sampleSize$ from $\uncertainSet = \left[-1, 1\right]^{\uncertainDim}$.

Figure \ref{fig:num exp unif 3d} follows the expected trend that the error decreases as the number of samples increases and the dimension of the uncertainty set decreases. A closer look at the value of the error shows that even for a moderate dimension of $\uncertainDim = 20$ (see Figure \ref{fig:num exp unif 2d}) of the uncertainty set, even after sampling as much as a million scenarios, one still gets an error as large as \(0.25\). To put this in perspective, observe that the value of the cost $\scCost(\scOptElement, \uncertainParam)$ varies between \(-1\) and \(+1\) as $\scOptElement$ and $\uncertainParam$ vary over $\scOptSet$ and $\uncertainSet$, respectively, which means that this is an error of about \(12.5\%\). The results are much worse for higher dimensions; for instance, when the dimension of the uncertainty set is \(50\) and a million scenarios are drawn, the error in the scenario approximation is around \(0.5\) in absolute units, which puts the relative error at around \(25\%\).

\begin{figure}[t]
\centering
\includegraphics[height=0.8\textwidth, width=0.9\textwidth]{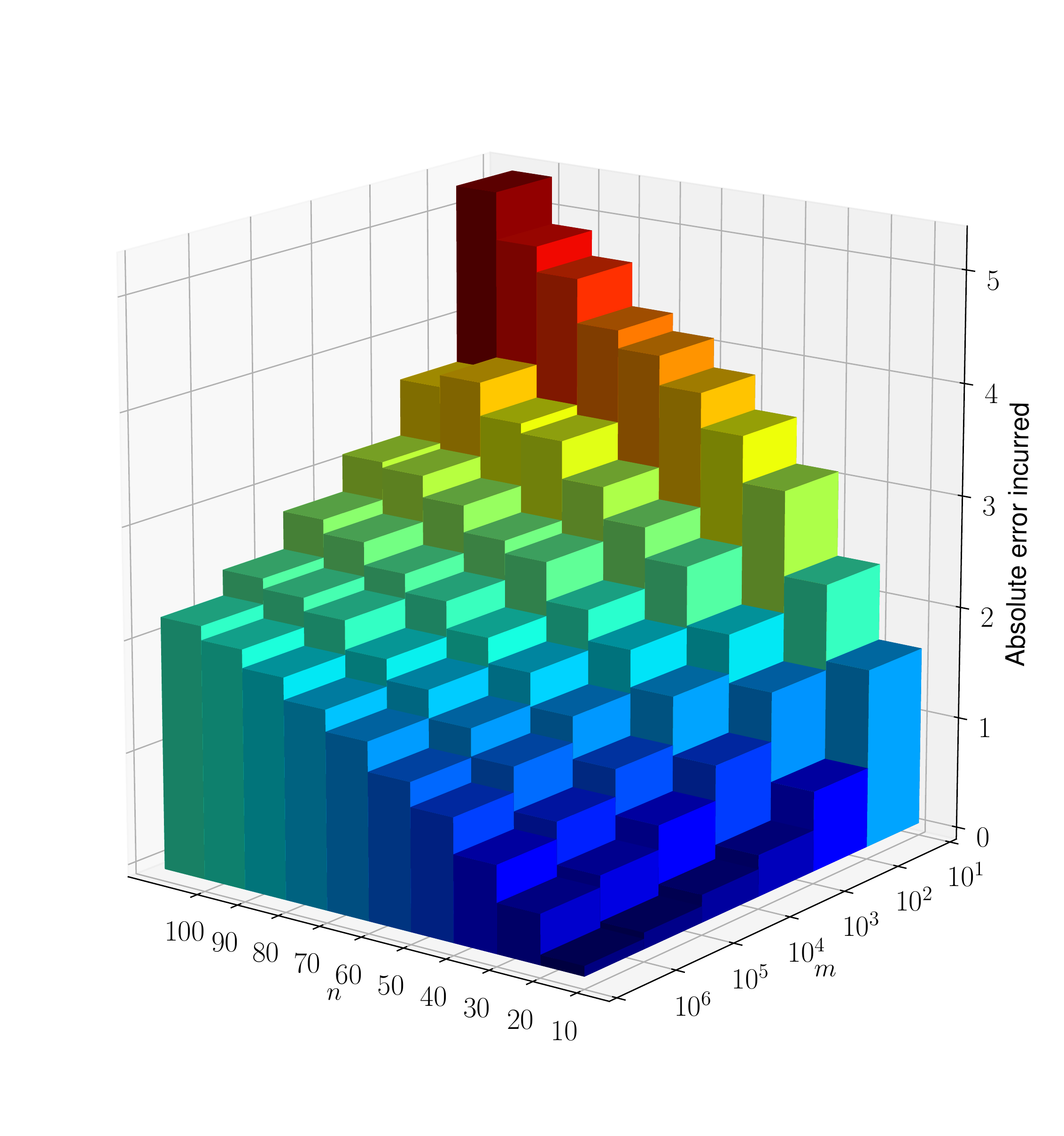}
\caption{Variation of the error in scenario approximations of the problem \eqref{eq:num exp 2} with respect to the dimension ($\uncertainDim$) of the uncertainty set and number ($\sampleSize$) of i.i.d samples drawn from the uncertainty set. The i.i.d samples were drawn according to the Gaussian distribution with mean 0 and variance $I_\uncertainDim$ on $\uncertainSet = \R^{\uncertainDim}$. The numbers reported here correspond to the average error of the scenario approximations over 25 sets of samples of length $\sampleSize$ from $\uncertainSet = \R^{\uncertainDim}$.}
\label{fig:num exp gauss 3d}
\end{figure}

\begin{figure}[t]
\centering
\includegraphics[width=10cm]{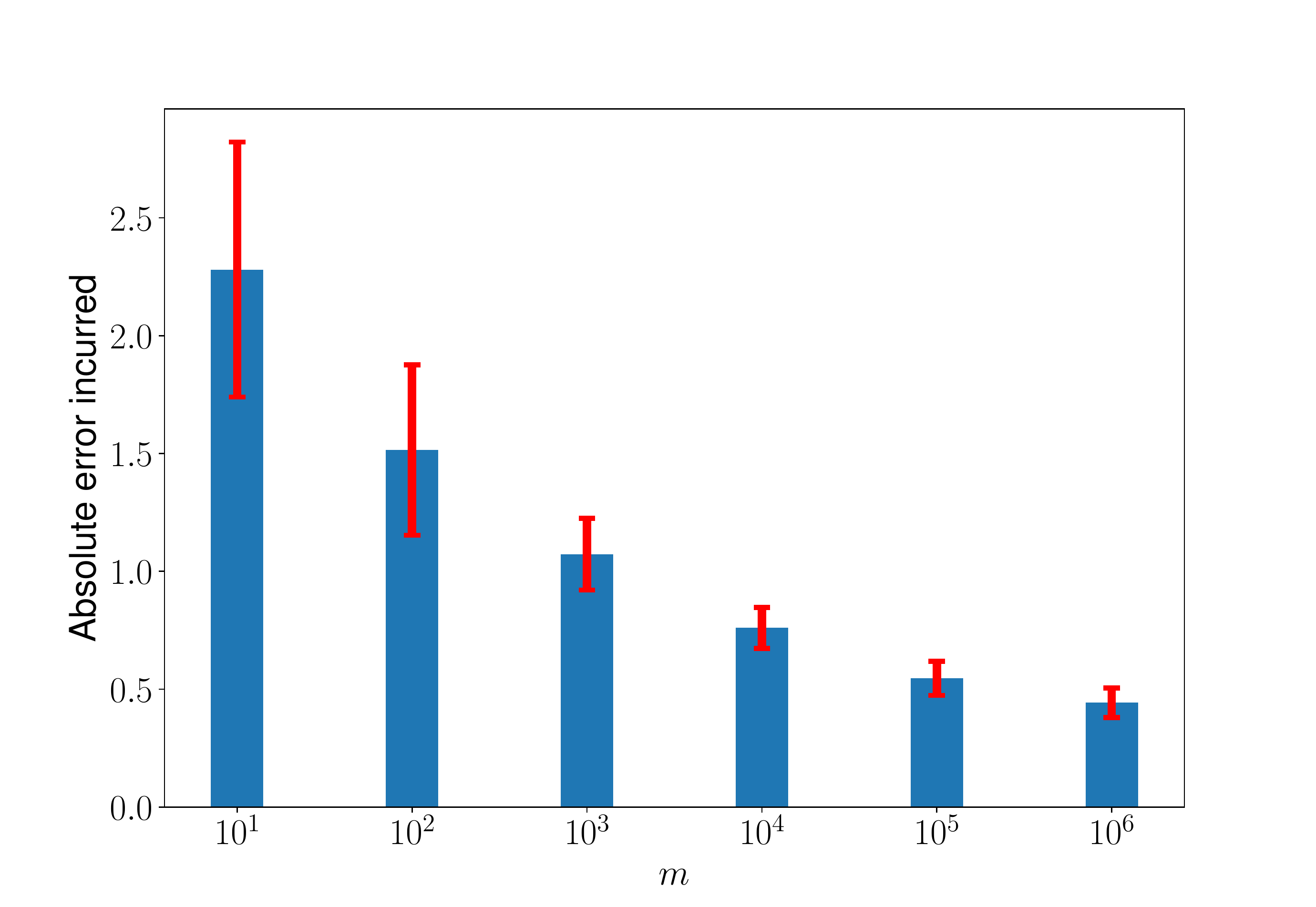}
\caption{Variation of the error in scenario approximations of the problem \eqref{eq:num exp 2} with respect to the number ($\sampleSize$) of i.i.d samples drawn from the uncertainty set when the dimension ($\uncertainDim$) of the uncertainty set is fixed at 20. The i.i.d samples were drawn according to the Gaussian distribution with mean 0 and variance $I_\uncertainDim$ on $\uncertainSet = \R^{\uncertainDim}$. The numbers reported here correspond to the average error of the scenario approximations over 25 sets of samples of length $\sampleSize$ from $\uncertainSet = \R^{\uncertainDim}$.}
\label{fig:num exp gauss 2d}
\end{figure}

We get even worse results if we consider the slightly modified problem
\begin{equation}\label{eq:num exp 2}
\ropOptSolution = \inf_{\scOptElement \in \left[0, 1\right]} \ \sup_{\uncertainParam \in \R^{\uncertainDim}} \left( \scOptElement\infnorm{\uncertainParam} - \infnorm{\uncertainParam}^2 \right),
\end{equation}
where the uncertainty set is noncompact. The optimal value is $\ropOptSolution = 0$ in this case as well; indeed,
\begin{equation}
\sup_{\uncertainParam \in \R^{\uncertainDim}} \left( \scOptElement\infnorm{\uncertainParam} - \infnorm{\uncertainParam}^2 \right) = \sup_{\infnorm{\uncertainParam} \in \R} \left( \scOptElement\infnorm{\uncertainParam} - \infnorm{\uncertainParam}^2 \right) = \frac{\scOptElement^2}{4},
\end{equation}
and consequently,
\begin{equation}
\ropOptSolution = \inf_{\scOptElement \in \left[0, 1\right]} \ \sup_{\uncertainParam \in \R^{\uncertainDim}} \left( \scOptElement\infnorm{\uncertainParam} - \infnorm{\uncertainParam}^2 \right) = \inf_{\scOptElement \in \left[0, 1\right]} \ \frac{\scOptElement^2}{4} = 0.
\end{equation}

In Figure \ref{fig:num exp gauss 3d} we present the results of our numerical experiments that give the error in the scenario approximation \eqref{eq:SOP} of \eqref{eq:num exp 2} and its variation with the dimension $\uncertainDim$ of the uncertainty set and the number $\sampleSize$ of samples. We sampled independently from the Gaussian distribution with mean 0 and variance $I_\uncertainDim$ on $\uncertainSet = \R^{\uncertainDim}$ to obtain these scenario approximations. The error shown in the figure for each value of $\sampleSize$ and $\uncertainDim$ was computed by taking the average error of the scenario approximations over \(25\) sets of samples of length $\sampleSize$ from $\uncertainSet = \R^{\uncertainDim}$. We see that even for a moderate dimension of $\uncertainDim = 20$ (see Figure \ref{fig:num exp gauss 2d}) of the uncertainty set, even after sampling as much as a million scenarios, one still gets an error as large as \(0.45\) in absolute units. As expected, the results are much worse for higher dimensions: for instance, when the dimension of the uncertainty set is \(50\) and a million scenarios are drawn, the error in the scenario approximation is around \(1.3\) in absolute units.

Of course, the measuring stick in the scenario approximations of the example immediately above the current paragraph is the probability measure corresponding to the Gaussian employed for sampling, and the specific concentration properties of this measure naturally affects the outcome of the experiment as a consequence. Whether these estimates are satisfactory or not is difficult to assess unilaterally and uniformly across the spectrum of robust minmax optimization problems, and such conclusions are best left to the judgment of the practitioners concerned.

The main culprit in the examples above is the fact that scenario approximations rely on i.i.d samples, and i.i.d samples of high dimensional random vectors tend to concentrate with high probability around certain regions of the space leaving the rest of the space unexplored; this feature leads to a preference for certain (typically thin) regions of the sample space of the algorithm, and unless the optimizers are in these thin sets, the quality of approximation may be low. The preceding observations clearly point to the fact that there is still scope to develop general, computationally feasible, and tight approximation schemes for robust optimization problems, \emph{especially in high dimensions insofar as the optimal value is concerned}; one such approximation method involving better sampling will be reported subsequently elsewhere.

	\section{An obstruction to consistency}
		\label{sec:negative}
Consistency of the scenario approach is not guaranteed, in general, for all problems of the form \eqref{eq:ROP}. Even in the particular case of random convex programs, observe that the statement of Theorem \ref{th: consistency convex} has the additional requirement of coercivity, which is not always satisfied if the set $\scOptSet$ is not compact. We begin with a simple example that illustrates this effect.

\begin{exmpl}\label{exa:cont}
 \label{eg:inconsistent}
 Let $\scOptSet = \R$ and $\uncertainSet = \R$. Assume that $\uncertainSet$ is endowed with the standard Gaussian probability measure with mean 0 and variance 1. Consider, in the language of \eqref{eq:ROP} the cost function
 \begin{equation}
 \label{eq:inconsistent}
 \scCost(\scOptElement, \uncertainParam) \Let
  \begin{cases} 
      0 & \text{if } \scOptElement \geq \uncertainParam, \\
      \uncertainParam - \scOptElement & \text{if } \uncertainParam-1\leq \scOptElement \leq \uncertainParam, \\
      -1 & \text{if } \scOptElement \leq \uncertainParam-1. 
   \end{cases}
 \end{equation}
 	All the requirements of the problem \eqref{eq:ROP} are satisfied by \eqref{eq:inconsistent} in addition to Assumption \ref{ass:full support}. Yet we show that the scenario approach is inconsistent in this situation. For a given sample $\sample{i}{\sampleSize}$ we define $\hat{\uncertainParam}_{\sampleSize} \Let \min_{i = 1,...,\sampleSize} \uncertainParam_i$. One checks that
 \begin{equation}
 \begin{aligned}
	\marginalFunction(\scOptElement) & \Let \sup_{\uncertainParam \in \uncertainSet} \scCost(\scOptElement, \uncertainParam) = 0, \quad \text{ and} \\
	\sampledMargFn{\sampleSize}(\scOptElement) & \Let \max_{i = 1, .., \sampleSize} \scCost(\scOptElement, \uncertainParam_i) = \scCost(\scOptElement, \hat{\uncertainParam}_{\sampleSize}), 
 \end{aligned}
 \end{equation}
 which imply that
 \begin{align}
 	\ropOptSolution &= \inf_{\scOptElement \in \scOptSet} \marginalFunction(\scOptElement) = 0, \quad \text {and} \\
 	\sopOptSolution &= \inf_{\scOptElement \in \scOptSet} \sampledMargFn{\sampleSize}(\scOptElement) = -1 \text{ for all $\sampleSize \in \N$}.
\end{align}
This means that $\lim_{\sampleSize \to +\infty} \sopOptSolution = -1$ for any sequence of samples $\sample{i}{+\infty}$, which shows that consistency fails to hold.
\end{exmpl}  

 It is clear from Example \ref{eg:inconsistent} that the set $\scOptSet$ being noncompact can readily lead to inconsistency of the scenario approach. In this section we study this issue further and characterize one possible obstruction to the consistency of the scenario approach when the set of optimization variables $\scOptSet$ is noncompact. We begin with the following definition that will be need in both the current and the next section:
\begin{definition}
The \emph{ tail probability } is the function $\tailProb: \scOptSet \times \Rpos \lra [0, 1]$ defined by
\begin{equation*}
	\tailProb(\scOptElement, \scAccuracy) \Let \probMeasure \bigl( \set{ \uncertainParam \in \uncertainSet \suchthat \scCost(x, \uncertainParam) > \ropOptSolution - \epsilon}  \bigr).
\end{equation*}
For each $\scAccuracy > 0$, we define the infimum of $\tailProb(\scOptElement, \scAccuracy)$ over all $\scOptElement \in \scOptSet$ by
\begin{equation*}
 \infTailProb(\scAccuracy) \Let \inf_{\scOptElement \in \scOptSet} \tailProb(\scOptElement, \scAccuracy).
\end{equation*}
\end{definition}

The following theorem is the key result of this section.
\begin{theorem}\label{th:weak nec}
	Consider the problem \eqref{eq:ROP} with its associated data, and suppose that Assumption \ref{ass:full support} holds. If there exists some $\scAccuracy > 0$ satisfying $\infTailProb(\scAccuracy) = 0$, then 
	\begin{equation*}
			\prodProb{\probMeasure}{\infty}\left( \set[\Big]{ \sample{i}{+\infty} \suchthat \lim_{\sampleSize \to +\infty} \sopOptSolution[\sampleSize] = \ropOptSolution } \right) = 0.
	\end{equation*}
\end{theorem}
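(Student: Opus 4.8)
The plan is to fix once and for all the accuracy level $\scAccuracy > 0$ furnished by the hypothesis $\infTailProb(\scAccuracy) = 0$, and to show that the event on which $\sopOptSolution[\sampleSize] \leq \ropOptSolution - \scAccuracy$ holds \emph{simultaneously for every} $\sampleSize \in \N$ already carries full $\prodProb{\probMeasure}{\infty}$-probability. A sample sequence in this event has $\sopOptSolution[\sampleSize]$ bounded below $\ropOptSolution$ by the fixed gap $\scAccuracy$ for all $\sampleSize$, hence cannot satisfy $\lim_{\sampleSize \to +\infty} \sopOptSolution[\sampleSize] = \ropOptSolution$; consequently the consistency event is contained in a $\prodProb{\probMeasure}{\infty}$-null set, which is exactly the assertion. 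This reduces the whole argument to proving, for each fixed $\sampleSize$, the per-sample-size lower bound $\prodProb{\probMeasure}{\sampleSize}(\badSet{\sampleSize}{\scAccuracy}) = 1$.

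For the per-$\sampleSize$ estimate I would use $\infTailProb(\scAccuracy) = 0$ directly: by definition of the infimum there is a sequence $(\scOptElement_k)_{k \in \N}$ in $\scOptSet$ with $\tailProb(\scOptElement_k, \scAccuracy) \to 0$, equivalently $\probMeasure(\set{\uncertainParam \suchthat \scCost(\scOptElement_k, \uncertainParam) \leq \ropOptSolution - \scAccuracy}) = 1 - \tailProb(\scOptElement_k, \scAccuracy) \to 1$. Fixing $k$ and using that the samples are i.i.d., the event $\bigcap_{i=1}^{\sampleSize}\set{\scCost(\scOptElement_k, \uncertainParam_i) \leq \ropOptSolution - \scAccuracy}$ has probability $(1 - \tailProb(\scOptElement_k, \scAccuracy))^{\sampleSize}$, and on it $\sampledMargFn{\sampleSize}(\scOptElement_k) = \max_{i \leq \sampleSize}\scCost(\scOptElement_k, \uncertainParam_i) \leq \ropOptSolution - \scAccuracy$, whence $\sopOptSolution[\sampleSize] \leq \sampledMargFn{\sampleSize}(\scOptElement_k) \leq \ropOptSolution - \scAccuracy$. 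Taking the countable union over $k$ and letting $k \to +\infty$ pushes the probability up to $\sup_k (1 - \tailProb(\scOptElement_k, \scAccuracy))^{\sampleSize} = 1$ for each fixed $\sampleSize$; this is a lower bound for $\prodProb{\probMeasure}{\sampleSize}(\badSetU{\sampleSize}{\scAccuracy})$, and the sandwich $\badSetU{\sampleSize}{\scAccuracy} \subset \badSet{\sampleSize}{\scAccuracy}$ then forces $\prodProb{\probMeasure}{\sampleSize}(\badSet{\sampleSize}{\scAccuracy}) = 1$. The crucial feature being exploited is that $\infTailProb(\scAccuracy) = 0$ is strong enough that the probability survives the $\sampleSize$-fold product intact for \emph{every} fixed $\sampleSize$.

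Finally I would stitch the per-$\sampleSize$ conclusions together: the event that $\sopOptSolution[\sampleSize] \leq \ropOptSolution - \scAccuracy$ fails for some $\sampleSize$ is a countable union of $\prodProb{\probMeasure}{\infty}$-null sets, hence null, so its complement has full probability and the reduction of the first paragraph closes the proof. The step I expect to require the most care is measurability: $\badSet{\sampleSize}{\scAccuracy}$ is defined through an infimum over the possibly uncountable set $\scOptSet$, and an infimum of measurable functions over an uncountable index set need not be measurable. The remedy built into the plan is to never handle the full infimum but only the \emph{countable} minimizing sequence $(\scOptElement_k)$: for each $\sampleSize$ the set $E_{\sampleSize} \Let \bigcup_{k}\bigcap_{i=1}^{\sampleSize}\set{\scCost(\scOptElement_k, \uncertainParam_i) \leq \ropOptSolution - \scAccuracy}$ is a countable union of finite intersections of sublevel sets of the l.s.c.\ maps $\scCost(\scOptElement_k, \cdot)$ (closed, hence Borel), so it is genuinely measurable, has probability $1$, satisfies $E_{\sampleSize} \subset \badSet{\sampleSize}{\scAccuracy}$, and forces $\sopOptSolution[\sampleSize] \leq \ropOptSolution - \scAccuracy$; the desired full-probability event is then simply $\bigcap_{\sampleSize} E_{\sampleSize}$. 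I also note that the monotonicity of $\sampleSize \mapsto \sampledMargFn{\sampleSize}(\scOptElement)$, though not logically needed, makes transparent why a single gap $\scAccuracy$ obstructs convergence along the entire tail of the sequence.
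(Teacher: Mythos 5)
Your proposal is correct and follows essentially the same route as the paper: for each fixed $\sampleSize$ you lower-bound the probability of the bad event by $\sup_{\scOptElement}\bigl(1-\tailProb(\scOptElement,\scAccuracy)\bigr)^{\sampleSize}=1$ using independence and the hypothesis $\infTailProb(\scAccuracy)=0$, and then stitch over $\sampleSize$ (the paper uses continuity from above for the nested sets $\badSetU{\sampleSize}{\scAccuracy}$ where you use a countable union of null complements — these are interchangeable). Your explicit replacement of the uncountable union over $\scOptSet$ by the measurable set $E_{\sampleSize}$ built from a countable minimizing sequence is a careful refinement of a point the paper glosses over, but it does not change the substance of the argument.
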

\begin{proof}
Since $\sopOptSolution$ is monotone non-decreasing in $\sampleSize$, recalling the definition of $\badSet{\sampleSize}{\scAccuracy}$ from \eqref{eq: bad set def} we see that
	\begin{align*}
		\set[\Big]{ \sample{i}{\infty} \in \uncertainSet \suchthat \lim_{\sampleSize \to +\infty} \sopOptSolution[\sampleSize] \leq \ropOptSolution - \scAccuracy} &= \bigcap_{\sampleSize \in \Nz} \badSet{\sampleSize}{\scAccuracy} \supset \bigcap_{\sampleSize \in \Nz} \badSetU{\sampleSize}{\scAccuracy}.
	\end{align*}
	This means that
	\begin{align*}
		 \prodProb{\probMeasure}{\infty} \left( \set[\Big]{ \sample{i}{+\infty} \in \uncertainSet \suchthat \lim_{\sampleSize \to +\infty} \sopOptSolution[\sampleSize] \leq \ropOptSolution - 2\scAccuracy} \right) &\geq \prodProb{\probMeasure}{\infty}\left(\bigcap_{\sampleSize \in \Nz} \badSetU{\sampleSize}{\scAccuracy}\right) \\
		&= \inf_{\sampleSize \in \Nz} \prodProb{\probMeasure}{\sampleSize}\left(\badSetU{\sampleSize}{\scAccuracy}\right).
	\end{align*}
	However, in view of \eqref{eq: bad set def} and the fact that the $\uncertainParam_i$'s are sampled independently, 
	\begin{align*}
		\prodProb{\probMeasure}{\sampleSize}\big(\badSetU{\sampleSize}{\scAccuracy}\big) &= \prodProb{\probMeasure}{\sampleSize}\left(\bigcup_{\scOptElement \in \scOptSet} \bigcap_{i = 1}^{\sampleSize} \set[\Big]{ \sample{i}{\sampleSize} \in \uncertainSet^{\sampleSize} \suchthat \scCost(\scOptElement, \uncertainParam_i) \leq \ropOptSolution - \scAccuracy}\right), \\
		& \geq \sup_{\scOptElement \in \scOptSet} \prodProb{\probMeasure}{\sampleSize}\left(\bigcap_{i = 1}^{\sampleSize} \set[\Big]{ \sample{i}{\sampleSize} \in \uncertainSet^{\sampleSize} \suchthat \scCost(\scOptElement, \uncertainParam_i) \leq \ropOptSolution - \scAccuracy}\right), \\
		& \geq \sup_{\scOptElement \in \scOptSet} \left( \prodProb{\probMeasure}{}\left(\set[\Big]{ \sample{i}{\sampleSize} \in \uncertainSet^{\sampleSize} \suchthat \scCost(\scOptElement, \uncertainParam_i) \leq \ropOptSolution - \scAccuracy}\right) \right)^\sampleSize, \\
		& \geq \sup_{\scOptElement \in \scOptSet} \bigl( 1-\tailProb(\scOptElement, \scAccuracy) \bigr)^\sampleSize = 1 \ \text{ by our assumption.} 
	\end{align*}
	In other words,
	\begin{align*}
		\prodProb{\probMeasure}{\infty}\left(\set[\Big]{ \sample{i}{+\infty} \in \uncertainSet \suchthat \lim_{\sampleSize \to +\infty} \sopOptSolution[\sampleSize] \leq \ropOptSolution - \scAccuracy} \right) = 1,
	\end{align*}
	and the assertion follows.
\end{proof}
\begin{remark}
It is clear from the proof of Theorem \ref{th:weak nec} that the set $ \set{ \uncertainParam \in \uncertainSet \suchthat \scCost(x, \uncertainParam) > \ropOptSolution - \epsilon} $ is precisely the  set from which one needs to sample in order to get a solution with $\scAccuracy$ accuracy; if the sampled sequence $\sample{i}{+\infty}$ does not contain any element from the set $ \set{ \uncertainParam \in \uncertainSet \suchthat \scCost(x, \uncertainParam) > \ropOptSolution - \epsilon} $ corresponding to any one value of $\scOptElement$, then the approximate solution $\sopOptSolution$ is going to be atleast $\scAccuracy$ far away from $\ropOptSolution$. In the light of this fact, the condition that $\infTailProb(\scAccuracy) = 0$ amounts to saying that the sets from which one needs to sample in order to get an approximation of accuracy $\scAccuracy$ are arbitrarily small regions of $\uncertainSet$; consequently and in restrospect the above result appears to be natural.
\end{remark}

\begin{remark}
The function $\infTailProb(\cdot)$ is very similar to an object we have encountered before: cf. the function $\infTailProbWors(\cdot)$ defined in \eqref{eq:inf tail prob worse}. These two functions $\infTailProb$ and $\infTailProbWors$ are weakly related to each other. In general, one can say that $\infTailProb(\scAccuracy) \leq \infTailProbWors(\scAccuracy)$ for every $\scAccuracy > 0$. Indeed, observe that for each $\scOptElement \in \scOptSet$, since $\ropOptSolution \leq \marginalFunction(\scOptElement),$  \[ \set{ \uncertainParam \in \uncertainSet \suchthat \scCost(x, \uncertainParam) > \ropOptSolution - \epsilon} \subset \set[\Big] { \uncertainParam \in \uncertainSet \suchthat \scCost(\scOptElement, \uncertainParam) > \marginalFunction(\scOptElement) - \scAccuracy }  \] since $\ropOptSolution \leq \marginalFunction(\scOptElement)$. This implies that $\tailProb(\scOptElement, \scAccuracy) \leq \tailProbWors(\scOptElement, \scAccuracy)$ for each $\scOptElement \in \scOptSet$ and $\scAccuracy > 0$, which further implies that \[ \infTailProb(\scAccuracy) = \inf_{\scOptElement \in \scOptSet} \tailProb(\scOptElement, \scAccuracy) \leq \inf_{\scOptElement \in \scOptSet} \tailProbWors(\scOptElement, \scAccuracy) = \infTailProbWors(\scAccuracy). \] In subsequent sections (see Remark \ref{rem: inf tail prob fundamental}) we will see further evidence pointing to the fundamental nature of $\infTailProb(\cdot)$ in relation to the scenario approach .

\end{remark}

\begin{example}[continues=exa:cont]
We check whether the type of obstruction introduced in Theorem \ref{th:weak nec} arises in the situation given in  example \ref{eg:inconsistent}. Since we know that $\ropOptSolution = 0$, if we take $\scAccuracy = 1, $ we get
\begin{align*}
\set{ \uncertainParam \in \uncertainSet \suchthat \scCost(x, \uncertainParam) > \ropOptSolution - \epsilon} &= \set{ \uncertainParam \in \R \suchthat \scCost(x, \uncertainParam) > -1} \\
&= \set{ \uncertainParam \in \R \suchthat \uncertainParam \leq \scOptElement - 1}, 
\end{align*}
which implies that \footnotemark 
\begin{equation*}
\tailProb(\scOptElement, 1) = \probMeasure(\set{ \uncertainParam \in \uncertainSet \suchthat \uncertainParam \leq \scOptElement - 1}) = \erfc(1-\scOptElement), 
\end{equation*}
and therefore,
\begin{equation*}
\infTailProb(1) = \inf_{\scOptElement \in \R} \erfc(1-\scOptElement) = 0.
\end{equation*}
It is now evident that the obstruction pointed out by Theorem \ref{th:weak nec} that prevents consistency in this example as well.
\end{example}
\footnotetext{Recall that the function $ \R{} \ni z \mapsto \erfc(z) \Let \frac{2}{\sqrt{\pi}} \int_{z}^{+\infty} \exp(-t^2) \dd t$ is equal to the $\probMeasure(\set{ \uncertainParam \in \R \suchthat \uncertainParam \leq -z })$ when $\probMeasure$ is the standard Gaussian measure (normal with mean 0 and variance 1) on $\R$. Clearly, $\lim_{z \to +\infty} \erfc(z) = \inf_{z \in \R} \erfc(z) = 0.$} 

The result of Theorem \ref{th:weak nec} is equally valid in the case where $\scOptSet$ is compact. However, we started the discussion claiming that the obstruction arises when $\scOptSet$ is a noncompact set, and the next proposition affirms this statement: we show that when the set $\scOptSet$ is compact, the obstruction presented in Theorem \ref{th:weak nec} cannot arise.

\begin{proposition}
\label{p:compact consistent}
	Consider the problem \eqref{eq:ROP} along with its associated data. If Assumption \ref{ass:full support} holds, then for every $\scAccuracy > 0, \ \tailProb(\cdot, \scAccuracy) : \scOptSet \lra [0, 1]$ is a positive lower semi continuous function, and consequently, for each $\scAccuracy > 0$,
	\begin{equation*}
	 \infTailProb(\scAccuracy) = \inf_{\scOptElement \in \scOptSet} \tailProb(\scOptElement, \scAccuracy) = \min_{\scOptElement \in \scOptSet} \tailProb(\scOptElement, \scAccuracy) > 0.
	\end{equation*}
\end{proposition}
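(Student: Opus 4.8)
The plan is to treat the three assertions of the proposition—positivity of $\tailProb(\cdot, \scAccuracy)$, its lower semicontinuity in $\scOptElement$, and the attainment of a strictly positive minimum—in that order, noting that the first two hold in full generality and that compactness of $\scOptSet$ (the standing hypothesis of this part of the discussion) enters only at the very end.

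For \emph{positivity}, I would fix $\scOptElement \in \scOptSet$ and $\scAccuracy > 0$ and start from the observation that $\marginalFunction(\scOptElement) = \sup_{\uncertainParam \in \uncertainSet} \scCost(\scOptElement, \uncertainParam) \geq \inf_{\scOptElement' \in \scOptSet} \marginalFunction(\scOptElement') = \ropOptSolution$. Consequently $\ropOptSolution - \scAccuracy < \ropOptSolution \leq \marginalFunction(\scOptElement)$, and since $\marginalFunction(\scOptElement)$ is a supremum strictly exceeding $\ropOptSolution - \scAccuracy$, there must exist $\uncertainParam_0 \in \uncertainSet$ with $\scCost(\scOptElement, \uncertainParam_0) > \ropOptSolution - \scAccuracy$; in particular the set $\set{\uncertainParam \in \uncertainSet \suchthat \scCost(\scOptElement, \uncertainParam) > \ropOptSolution - \scAccuracy}$ is nonempty. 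Because $\scCost(\scOptElement, \cdot)$ is l.s.c.\ (being the restriction of the jointly l.s.c.\ function $\scCost$ to the slice $\set{\scOptElement} \times \uncertainSet$), this set is the complement of a closed sublevel set and is therefore open. Assumption \ref{ass:full support} then guarantees it has positive measure, so $\tailProb(\scOptElement, \scAccuracy) > 0$.

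For \emph{lower semicontinuity} in $\scOptElement$, I would write $\tailProb(\scOptElement, \scAccuracy) = \int_{\uncertainSet} \mathbf{1}\set{\scCost(\scOptElement, \uncertainParam) > \ropOptSolution - \scAccuracy} \, \dd\probMeasure(\uncertainParam)$ and exploit the joint lower semicontinuity of $\scCost$ on the product metric space $\scOptSet \times \uncertainSet$. Joint l.s.c.\ makes the superlevel set $\set{(\scOptElement, \uncertainParam) \suchthat \scCost(\scOptElement, \uncertainParam) > \ropOptSolution - \scAccuracy}$ open, so its indicator $g$ is a nonnegative, jointly l.s.c.\ function bounded by $1$. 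For any sequence $\scOptElement_n \to \scOptElement$ in $\scOptSet$, applying joint l.s.c.\ along the sequence $(\scOptElement_n, \uncertainParam) \to (\scOptElement, \uncertainParam)$ (constant in the second coordinate) gives $\liminf_n g(\scOptElement_n, \uncertainParam) \geq g(\scOptElement, \uncertainParam)$ for every fixed $\uncertainParam$. Fatou's lemma applied to the integrands $g(\scOptElement_n, \cdot) \geq 0$ then yields $\liminf_n \tailProb(\scOptElement_n, \scAccuracy) \geq \int_\uncertainSet \liminf_n g(\scOptElement_n, \uncertainParam) \, \dd\probMeasure \geq \tailProb(\scOptElement, \scAccuracy)$, which is exactly sequential lower semicontinuity; since $\scOptSet \subset \R^{\scOptDim}$ is metric, this coincides with lower semicontinuity. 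I expect this passage—from pointwise l.s.c.\ of the integrand (obtained via openness of the superlevel set) to l.s.c.\ of the measure-valued map, through Fatou—to be the technical heart of the argument and the step demanding the most care.

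Finally, invoking the compactness of $\scOptSet$, the Weierstrass extreme value theorem for lower semicontinuous functions ensures that $\tailProb(\cdot, \scAccuracy)$ attains its infimum on $\scOptSet$ at some $\scOptElement^\ast$, so that $\infTailProb(\scAccuracy) = \inf_{\scOptElement \in \scOptSet} \tailProb(\scOptElement, \scAccuracy) = \min_{\scOptElement \in \scOptSet} \tailProb(\scOptElement, \scAccuracy) = \tailProb(\scOptElement^\ast, \scAccuracy)$. The positivity established in the first step then forces this minimum to be strictly positive, completing the proof and confirming that the obstruction of Theorem \ref{th:weak nec} cannot arise for compact $\scOptSet$.
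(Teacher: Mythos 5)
Your proposal is correct and follows essentially the same route as the paper's own proof: nonemptiness and openness of the superlevel set plus full support for positivity, a pointwise $\liminf$ inequality for the indicator functions combined with Fatou's lemma for lower semicontinuity, and Weierstrass on the compact set $\scOptSet$ to conclude. The only cosmetic difference is that you derive the pointwise inequality from openness of the joint superlevel set, while the paper argues it directly from $\liminf_n \scCost(\scOptElement_n,\uncertainParam) \geq \scCost(\scOptElement,\uncertainParam)$; these are the same argument.
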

A proof of Proposition \ref{p:compact consistent} is provided in Appendix \ref{app:proofs}.
	
	\section{Finite sample performance guarantees in the nonconvex setting}
		\label{sec:positive}
In this section, for a large class of nonconvex minmax problems, we prove a general positive result that gives an upper bound on the a priori probability of the bad set \eqref{eq: bad set def} for finite samples of the scenario approach. In other words, we establish a finite sample performance guarantee in a general nonconvex setting. Of course, in the presence of more detailed structure, we may be able to refine these preliminary estimates, and as an illustration of this scheme we then discusss several special cases of this result.

\subsection{General performance guarantees}
The first order of business is making the word \emph{nonconvexity} precise. The class of nonconvex functions is vast, and it appears that very little can be said about a priori estimates under the scenario method at this level of generality; indeed, it is natural to expect, at least in principle, that the greater the regularity of the functions under consideration, tighter the bounds that should be possible to obtain. Physical considerations point us towards focussing our investigations on classes of functions that arise naturally in physical systems, e.g., trigonometric polynomials of finite bandwidth, smooth functions restricted to compact sets, etc. Our approach here follows standard principles of functional analysis and approximation theory via estimates involving covering numbers \`a la \cite{ref:CukZho-07}; the techniques exposed here are fairly general, and conform to the following simple steps:
\begin{tcolorbox}[title=Summary of our approach]
	\begin{enumerate}[label=\textbf{(\Roman*)}, leftmargin=*, widest=II, align=left]
		\item We find upper bounds on the covering number of the family of functions
			\[
				\scRvSet \Let \set[\big]{\scCost(\scOptElement, \cdot) : \uncertainSet \lra \R \suchthat \scOptElement \in \scOptSet}
			\]
			in the supremum norm topology defined below. This step provides us with a finite collection of representatives from the (possibly infinite dimensional) class of functions under consideration.
		\item The i.i.d property of the sampling in the scenario approach permits us to employ the bounds on the covering number found in the preceding step in standard probabilistic inequalities to arrive at bounds on the probability $\badSetProb{\sampleSize}{\scAccuracy}$.
	\end{enumerate}
\end{tcolorbox}

\subsubsection{Background}
The class of nonconvex functions is vast, and we consider only a few reasonable classes of finite and infinite dimensional subsets of this class in the article at hand. The primary difficulty with infinite dimensionality of function classes is overcome in a standard way by the consideration of covering numbers. Recall that given a metric space $(M, d)$ and a subset $K \subset M$, a set $K' \subset K$ is called an $\scAccuracy$ cover of $K$ if for every element $a \in K$, there exist $a' \in K'$ such that $d(a, a') \leq \scAccuracy$. We define the \emph{covering number} $\coveringNum{K}{\scAccuracy}$ of $K$ to be the smallest number $n \in \Nz$ such that there exists an $\scAccuracy$ cover of $K$ of cardinality $n$. It is a standard result that $K \subset M$ is precompact iff for all $\epsilon > 0$, the covering number $\coveringNum{K}{\epsilon}$ is finite. 

Recall that if $\uncertainSet$ is compact, the set $\smoothFnSet{}{(\uncertainSet)}$ of continuous real valued functions on $\uncertainSet$ is a metric space when endowed with the metric $d(g_1, g_2) \Let \unifnorm{g_1 - g_2}$ inherited from the supremum norm given by $ \unifnorm{g} \Let \sup_{\uncertainParam \in \uncertainSet} |g(\uncertainParam)|  $.

\subsubsection{Main Result}
The following theorem is the key result of this section. Given the problem \eqref{eq:ROP} and its associated notation, let $\scRvSet$ denote the family of functions 
\begin{equation}
	\label{eq: Kf def}
	\scRvSet \Let \set[\big]{\scCost(\scOptElement, \cdot) : \uncertainSet \lra \R \suchthat \scOptElement \in \scOptSet}.
\end{equation}

\begin{theorem}
	\label{th:gen prop}
	Consider the problem \eqref{eq:ROP} along with its associated data, and suppose that Assumption \ref{ass:full support} holds. Let \(\scRvSet\) be as defined in \eqref{eq: Kf def}, and recall from \eqref{eq: bad set prob} that $\badSetProb{\sampleSize}{\scAccuracy} = \prodProb{\probMeasure}{\sampleSize}(\badSet{\sampleSize}{\scAccuracy})$. If $\uncertainSet$ is compact and the set of functions $\scRvSet \subset \smoothFnSet{}{(\uncertainSet)}$ defined in \eqref{eq: Kf def} is precompact in $\smoothFnSet{}{(\uncertainSet)}$, then
	\begin{equation}
		\badSetProb{\sampleSize}{\scAccuracy} \leq \coveringNum{\scRvSet}{\frac{\scAccuracy}{4}} \exp\Big(-\sampleSize \infTailProb\Big(\frac{\scAccuracy}{4}\Big)\Big).
	\end{equation}
\end{theorem}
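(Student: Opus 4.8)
The plan is to combine the sandwich relation \eqref{eq:bad set new} with a finite $\scAccuracy/4$-net of the family $\scRvSet$ so as to replace the uncountable union over $\scOptSet$ implicit in \eqref{eq: bad set def} by a finite union, after which independence and a union bound close the estimate. First I would invoke \eqref{eq:bad set new} to reduce matters to bounding $\prodProb{\probMeasure}{\sampleSize}(\badSetU{\sampleSize}{\scAccuracy/2})$: since $\badSet{\sampleSize}{\scAccuracy} \subset \badSetU{\sampleSize}{\scAccuracy/2}$ we have $\badSetProb{\sampleSize}{\scAccuracy} \leq \prodProb{\probMeasure}{\sampleSize}(\badSetU{\sampleSize}{\scAccuracy/2})$. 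Using the $\bigcup_{\scOptElement}\bigcap_{i}$ structure of \eqref{eq: bad set def}, I would write $\badSetU{\sampleSize}{\scAccuracy/2} = \bigcup_{\scOptElement \in \scOptSet} A_{\scOptElement}$ with $A_{\scOptElement} = \bigcap_{i=1}^{\sampleSize} \set{\scCost(\scOptElement, \uncertainParam_i) \leq \ropOptSolution - \scAccuracy/2}$, which exposes exactly the object to be discretized.

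Since $\uncertainSet$ is compact and $\scRvSet \subset \smoothFnSet{}{(\uncertainSet)}$ is precompact, the covering number $\coveringNum{\scRvSet}{\scAccuracy/4}$ is finite; I would fix an $\scAccuracy/4$-cover $\set{\scCost(\scOptElement_1, \cdot), \ldots, \scCost(\scOptElement_M, \cdot)}$ of $\scRvSet$ of cardinality $M = \coveringNum{\scRvSet}{\scAccuracy/4}$ (by the paper's convention the cover elements lie in $\scRvSet$, hence are of the form $\scCost(\scOptElement_k, \cdot)$). The crux is the inclusion $\bigcup_{\scOptElement \in \scOptSet} A_{\scOptElement} \subset \bigcup_{k=1}^{M} A'_{\scOptElement_k}$, where $A'_{\scOptElement_k} = \bigcap_{i=1}^{\sampleSize} \set{\scCost(\scOptElement_k, \uncertainParam_i) \leq \ropOptSolution - \scAccuracy/4}$. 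To establish it, given any $\scOptElement$ I would choose $k$ with $\unifnorm{\scCost(\scOptElement, \cdot) - \scCost(\scOptElement_k, \cdot)} \leq \scAccuracy/4$; then on $A_{\scOptElement}$ we get, for every $i$, $\scCost(\scOptElement_k, \uncertainParam_i) \leq \scCost(\scOptElement, \uncertainParam_i) + \scAccuracy/4 \leq \ropOptSolution - \scAccuracy/2 + \scAccuracy/4 = \ropOptSolution - \scAccuracy/4$, so that $A_{\scOptElement} \subset A'_{\scOptElement_k}$, and taking the union over $\scOptElement$ gives the claim. A pleasant by-product is that any measurability concern about the uncountable union evaporates, since the final majorant is a finite union of (closed, hence Borel) sets.

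It then remains to estimate the finitely many terms. A union bound gives $\prodProb{\probMeasure}{\sampleSize}(\badSetU{\sampleSize}{\scAccuracy/2}) \leq \sum_{k=1}^{M} \prodProb{\probMeasure}{\sampleSize}(A'_{\scOptElement_k})$, the i.i.d.\ property turns each term into a product yielding $\prodProb{\probMeasure}{\sampleSize}(A'_{\scOptElement_k}) = (1 - \tailProb(\scOptElement_k, \scAccuracy/4))^{\sampleSize} \leq (1 - \infTailProb(\scAccuracy/4))^{\sampleSize}$, and the elementary inequality $1 - t \leq e^{-t}$ gives $\prodProb{\probMeasure}{\sampleSize}(A'_{\scOptElement_k}) \leq \exp(-\sampleSize \infTailProb(\scAccuracy/4))$; summing over the $M$ cover elements produces the asserted bound. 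The step I expect to be the main obstacle is the inclusion in the covering argument: it is precisely there that one must use sup-norm closeness uniformly over all $\uncertainParam \in \uncertainSet$ (whence the need for compactness of $\uncertainSet$ and the choice of the $\smoothFnSet{}{(\uncertainSet)}$ topology) in order to trade the threshold relaxation from $\scAccuracy/2$ down to $\scAccuracy/4$ against the cover radius $\scAccuracy/4$. Keeping the three budgeted quantities straight — the $\scAccuracy/2$ inherited from the sandwich, the $\scAccuracy/4$ spent on the cover, and the $\scAccuracy/4$ that survives inside $\infTailProb$ — is where care is required.
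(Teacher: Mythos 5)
Your proposal is correct and follows essentially the same route as the paper's own proof: sandwich $\badSet{\sampleSize}{\scAccuracy}$ inside $\badSetU{\sampleSize}{\scAccuracy/2}$, replace the uncountable union over $\scOptSet$ by a finite union indexed by an $\scAccuracy/4$-net of $\scRvSet$ in the sup norm, then apply the union bound, independence, and $1-t\le \epower{-t}$. The only difference is cosmetic — you spell out the inclusion $A_{\scOptElement}\subset A'_{\scOptElement_k}$ and the $\scAccuracy$-budget bookkeeping more explicitly than the paper does.
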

\begin{proof}
	Fix $\scAccuracy > 0$ and $m \in \N$. By definition of $\coveringNum{\scRvSet}{\frac{\scAccuracy}{4}}$, there exists a subset $(\scOptElement_i )_{i=1}^{\coveringNum{\scRvSet}{\frac{\scAccuracy}{4}}}$ of $\scOptSet$ such that for each $\scOptElement \in \scOptSet$ there exists $\scOptElement_i$ that satisfies
	\begin{equation*}
	\sup_{\uncertainParam \in \uncertainSet} | \scCost(\scOptElement, \uncertainParam) - \scCost(\scOptElement_i, \uncertainParam) | < \frac{\scAccuracy}{4}.
	\end{equation*}
	Recalling the definition of $\badSetU{\sampleSize}{\scAccuracy}$ from \eqref{eq: bad set def}, we see that
	\begin{align*}
	\badSetU{\sampleSize}{\frac{\scAccuracy}{2}} &= \bigcup_{\scOptElement \in \scOptSet} \bigcap_{i = 1}^{\sampleSize} \set[\Big]{ \sample{i}{\sampleSize} \in \uncertainSet^{\sampleSize} \suchthat \scCost(\scOptElement, \uncertainParam_i) \leq \ropOptSolution - \frac{\scAccuracy}{2}} \\ 
	& \subset \bigcup_{j = 1}^{\coveringNum{\scRvSet}{\frac{\scAccuracy}{4}}} \bigcap_{i = 1}^{\sampleSize} \set[\Big]{ \sample{i}{\sampleSize} \in \uncertainSet^{\sampleSize} \suchthat \scCost(\scOptElement_j, \uncertainParam_i) \leq \ropOptSolution - \frac{\scAccuracy}{4} }.
	\end{align*}
	This means, in view of the definition of $\badSetProb{\sampleSize}{\scAccuracy}$ in \eqref{eq: bad set prob}
	\begin{align*} 
	\badSetProb{\sampleSize}{\scAccuracy} &= \prodProb{\probMeasure}{\sampleSize}(\badSet{\sampleSize}{\scAccuracy}) \leq \prodProb{\probMeasure}{\sampleSize}\left(\badSetU{\sampleSize}{\frac{\scAccuracy}{2}}\right) \\
	&\overset{(\dag)}{\leq}  \sum_{j = 1}^{\coveringNum{\scRvSet}{\frac{\scAccuracy}{4}}} \prodProb{\probMeasure}{\sampleSize}\left(\bigcap_{i = 1}^{\sampleSize} \set[\Big]{ \sample{i}{\sampleSize} \in \uncertainSet^{\sampleSize} \suchthat \scCost(\scOptElement_j, \uncertainParam_i) \leq \ropOptSolution - \frac{\scAccuracy}{4} }\right) \\
	& \leq \sum_{j = 1}^{\coveringNum{\scRvSet}{\frac{\scAccuracy}{4}}} \prodProb{\probMeasure}{}\left(\set[\Big]{ \uncertainParam \in \uncertainSet \suchthat \scCost(\scOptElement_j, \uncertainParam) \leq \ropOptSolution - \frac{\scAccuracy}{4} }\right) ^{\sampleSize} \\
	& \leq \sum_{j = 1}^{\coveringNum{\scRvSet}{\frac{\scAccuracy}{4}}} \left(1 - \prodProb{\probMeasure}{}\left(\set[\Big]{ \uncertainParam \in \uncertainSet \suchthat \scCost(\scOptElement_j, \uncertainParam) > \ropOptSolution - \frac{\scAccuracy}{4} }\right) \right)^{\sampleSize} \\
	& \overset{(\ddag)}{\leq} \sum_{j = 1}^{\coveringNum{\scRvSet}{\frac{\scAccuracy}{4}}} \exp{ \left( -\sampleSize \tailProb\left(\scOptElement_j, \frac{\scAccuracy}{4}\right) \right)} \\
	& \leq \coveringNum{\scRvSet}{\frac{\scAccuracy}{4}} \exp{ \left( -\sampleSize \infTailProb\left(\frac{\scAccuracy}{4}\right) \right)},
	\end{align*}
	as asserted, where we have employed the standard union bound in step \((\dag)\) and the inequality \(1 - z \le \epower{-z}\) for \(z\in[0, 1]\) in step \((\ddag)\) above.
\end{proof}

\begin{remark}
	In the absence of any further structure in the various sets that appear in the definition of \(\badSet{\sampleSize}{\scAccuracy}\) in the proof of Theorem \ref{th:gen prop}, it appears that the standard union bound employed in step \((\dag)\) in the proof above is a reasonable option. However, in certain specific cases it may be possible to refine this particular step further to arrive at tighter bounds.
\end{remark}

\begin{remark}\label{rem: pos alt form}
Theorem \ref{th:gen prop} can be rewritten in the following way. Suppose we are given a desired accuracy level $\scAccuracy$ and confidence level $\scConf$. We define
\begin{align}\label{eq: sample size}
\sampleSize(\scAccuracy, \scConf) \Let \frac{1}{\infTailProb(\frac{\scAccuracy}{4})} \left( \ln\left(\frac{1}{\scConf}\right) + \ln{\coveringNum{\scRvSet}{\frac{\scAccuracy}{4}}} \right).
\end{align}
If the number $\sampleSize$ of i.i.d samples (scenarios) drawn from $\uncertainSet$ under $\probMeasure$ is at least $\sampleSize(\scAccuracy, \scConf)$, then the probability of occurence of the bad set $\badSet{\sampleSize}{\scAccuracy}$ is guaranteed to be at most $\scConf$. Let us compare this estimate with that of Theorem \ref{th:convex sop ftg}. We begin by noting that $N(\scAccuracy, \scConf)$ defined in \eqref{eq:def N} can be rewritten explicitly as (see \cite[Theorem 1]{ref:CamGarPran-09})
\begin{equation}\label{eq:def N explicit}
N(\scAccuracy, \scConf) \geq \frac{2}{\scAccuracy}\left( \ln\left(\frac{1}{\scConf}\right) + \scOptDim \right).
\end{equation}
	With this in mind, we can rewrite the result of Theorem \ref{th:convex sop ftg} in the language of its statement as follows: If the number $\sampleSize$ of i.i.d samples drawn from $\uncertainSet$ under $\probMeasure$ is at least $N(\scAccuracy, \scConf)$, then the probability of occurence of the bad set $\badSet{\sampleSize}{h(\scAccuracy)}$ is guaranteed to be at most $\scConf$. Observe that $N(\scAccuracy, \scConf)$ samples guarantee an accuracy of $h(\scAccuracy)$ as opposed to $\scAccuracy$. To remove this dependency on $h(\scAccuracy)$ and to obtain an explicit result, recall the definitions in \eqref{eq:tail prob worse}, \eqref{eq:inf tail prob worse} and \eqref{eq: ULB}. If $h(\cdot)$ has a well defined inverse $h^{-1}(\cdot)$, then sampling $N(h^{-1}(\scAccuracy), \scConf)$ number of elements would guarantee an accuracy of $\scAccuracy$ for each \(\scAccuracy\). However such an inverse function may not always exist. Neverthless, the right hand side of \eqref{eq: ULB} in the definition of $h(\cdot)$ is a pseudo inverse of the function $\infTailProbWors$. Indeed, if $\infTailProbWors$ is monotonically increasing, then $\sup \set[\big] { \delta \geq 0 \suchthat \inf_{\scOptElement \in \scOptSet} \tailProbWors(\scOptElement, \scAccuracy) \leq \delta }$ is its inverse in the usual sense. Due to this fact, if we employ $\infTailProbWors$ as the inverse of $h(\cdot)$, define
\begin{align}
	\label{eq: convex sample size}
	\convSampleSize(\scAccuracy, \scConf) \Let \frac{2}{\infTailProbWors(\scAccuracy)} \left( \ln\left(\frac{1}{\scConf}\right) + \scOptDim \right),
\end{align}
and sample $\convSampleSize(\scAccuracy, \scConf)$ number of i.i.d.\ samples from $\uncertainSet$, then Theorem \ref{th:convex sop ftg} guarantees that the probability of occurence of the bad set $\badSet{\sampleSize}{\scAccuracy}$ is at most $\scConf$. 
\end{remark}

\begin{remark}
	\label{rem: inf tail prob fundamental}
	Theorem \ref{th:weak nec} along with the estimate of Theorem \ref{th:gen prop} in the form given in \eqref{eq: sample size} points to the fundamental nature of the function $\infTailProb(\cdot)$. On the one hand, Theorem \ref{th:weak nec} says that if $\infTailProb(\scAccuracy)$ is equal to zero, then the scenario approach is not consistent and even as the number of i.i.d samples drawn approach infinity, the scenario approximation remains at least $\scAccuracy$ away from the true minimum. On the other hand, according to Theorem \ref{th:gen prop}, even if $\infTailProb(\scAccuracy)$ is nonzero, the number of samples that need to be drawn to guarantee an approximation of accuracy $\scAccuracy$ grows increasingly large as $\infTailProb(\scAccuracy)$ goes to zero. This is reminiscent of the \emph{condition number} of a matrix in linear algebra; recall that a square matrix is singular only if its condition number is infinite. However, even if the condition number is finite, it becomes increasingly harder to numerically compute the inverse of a matrix as its condition number increases, to the extent that a matrix with a very large condition number is practically singular from a numerical standpoint. In this sense, $\infTailProb(\scAccuracy)$ is a measure of how well behaved the scenario approximations of a robust optimization problem are: as $\infTailProb(\scAccuracy)$ decreases, the finite sample behaviour of the scenario approximations also deteriorate, and finally, when $\infTailProb(\scAccuracy)$ becomes zero, the performance deteriorates so much that even consistency is lost.
\end{remark}

\subsection{Scenario bounds for bandlimited trigonometric functions}
Here we employ Theorem \ref{th:gen prop} of the previous section to derive bounds on the probability of the ``bad set'' $\badSet{\sampleSize}{\scAccuracy}$ in the situation where $\uncertainSet$ is an $\uncertainDim$-dimensional hypercube and the set of functions $\scRvSet = \{\scCost(\scOptElement, \cdot) : \uncertainSet \lra \R \}_{\scOptElement \in \scOptSet} \subset \smoothFnSet{}{(\uncertainSet)}$ is a bounded subset of the linear subspace of trigonometric polynomials of bandwidth $\order$. More precisely, our premise for this subsection is the following:

\begin{assumption}
	\label{ass:trigonometric poly}
	In the context of the problem \eqref{eq:ROP} and its associated data, we stipulate that:
	\begin{enumerate}[label=\textup{(\roman*)}, align=right, widest=iii, leftmargin=*]
		\item $\uncertainSet = [-\hypercubeLen, \hypercubeLen]^{\uncertainDim} \subset \R^{\uncertainDim}$,
		\item For each $\scOptElement \in \scOptSet$, the function $\scCost(\scOptElement, \cdot) : \uncertainSet \lra \R$ is a trigonometric polynomial of bandwidth $\order$. In other words,
		\begin{equation*}
			\scCost(\scOptElement, \uncertainParam) = \sum_{k \in [-\order, \order]^{\uncertainDim} \cap \Z^{\uncertainDim}} \coeffSin_{k}(\scOptElement) \sin\left(2\pi \inprod{k}{\uncertainParam}\right) + \coeffCos_{k}(\scOptElement) \cos\left(2\pi \inprod{k}{\uncertainParam}\right),
		\end{equation*}
		\item $\sup_{\scOptElement \in \scOptSet} \ltwonorm{\scCost(\scOptElement, \cdot)} \teL \trigLtwoBound < +\infty$.
	\end{enumerate}
\end{assumption}

The set of trigonometric polynomials of bandwidth $\order$ is a $(2\order+1)^{\uncertainDim}$ dimensional subspace of $\smoothFnSet{}{(\uncertainSet)}$, and therefore, any bounded subset of it is precompact. Consequently, Theorem \ref{th:gen prop} applies to this situation. In the following Lemma, whose proof is deferred to Appendix \ref{app:cov num est}, we provide estimates of the covering number $\coveringNum{\scRvSet}{\scAccuracy}$. 

\begin{lemma}
	\label{l:trig covering num}
	Suppose that Assumption \ref{ass:trigonometric poly} holds. Let \(\scRvSet\) be as defined in \eqref{eq: Kf def} and define $\dimTrig \Let (2\order+1)^{\uncertainDim}$. Then
	\[
		\coveringNum{\scRvSet}{\scAccuracy} \leq \frac{1}{\dimTrig}\biggl(\frac{\pi \dimTrig^2}{2}\biggr)^{\dimTrig} {\left(\frac{\scAccuracy}{2\trigLtwoBound}\right)}^{-2\dimTrig}.
	\]
\end{lemma}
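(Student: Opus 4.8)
The plan is to exploit that \(\scRvSet\) is a bounded subset of the finite-dimensional space \(\mathcal{T}_{\order}\) of trigonometric polynomials of bandwidth \(\order\) on \(\uncertainSet = [-\hypercubeLen,\hypercubeLen]^{\uncertainDim}\), and to reduce the supremum-norm covering problem to the classical problem of covering a Euclidean ball in the coefficient space \(\R^{2\dimTrig}\). Writing each \(\scCost(\scOptElement,\cdot) = f_c\) with coefficient vector \(c = (\coeffSin_k(\scOptElement),\coeffCos_k(\scOptElement))_{k}\), the engine of the proof is a pair of norm comparisons relating the supremum norm \(\unifnorm{\cdot}\) on \(\mathcal{T}_{\order}\), the Euclidean norm \(\lVert c\rVert_2\) on coefficients, and the norm \(\ltwonorm{f_c}\) that carries the hypothesis of Assumption \ref{ass:trigonometric poly}(iii).

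First I would bound the supremum norm from above by the coefficient norm. Since \(\lvert\sin(2\pi\inprod{k}{\uncertainParam})\rvert\le 1\) and \(\lvert\cos(2\pi\inprod{k}{\uncertainParam})\rvert\le 1\) pointwise, the triangle inequality gives \(\unifnorm{f_c}\le\sum_{k}(\lvert\coeffSin_k\rvert+\lvert\coeffCos_k\rvert)=\lVert c\rVert_1\), and the Cauchy--Schwarz inequality over the \(2\dimTrig\) coordinates yields \(\lVert c\rVert_1\le\sqrt{2\dimTrig}\,\lVert c\rVert_2\). Thus the synthesis map \(c\mapsto f_c\) is \(\sqrt{2\dimTrig}\)-Lipschitz from \((\R^{2\dimTrig},\lVert\cdot\rVert_2)\) into \((\smoothFnSet{}{(\uncertainSet)},\unifnorm{\cdot})\), so any Euclidean cover of the admissible coefficient vectors at scale \(\scAccuracy/\sqrt{2\dimTrig}\) induces a supremum-norm cover of \(\scRvSet\) at scale \(\scAccuracy\). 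Second I would control \(\lVert c\rVert_2\) by \(\ltwonorm{f_c}\) via Parseval: the orthogonality of the trigonometric system on the cube yields \(\lVert c\rVert_2\le\sqrt{2}\,\ltwonorm{f_c}\le\sqrt{2}\,\trigLtwoBound \teL R\) by Assumption \ref{ass:trigonometric poly}(iii). Hence \(\scRvSet\) is contained in the image under the synthesis map of the Euclidean ball \(\{\lVert c\rVert_2\le R\}\subset\R^{2\dimTrig}\), and combining with the first step gives \(\coveringNum{\scRvSet}{\scAccuracy}\le \bigl(\text{Euclidean }\tfrac{\scAccuracy}{\sqrt{2\dimTrig}}\text{-covering number of }\{\lVert c\rVert_2\le R\}\bigr)\).

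Finally I would estimate this Euclidean covering number by a grid/volumetric argument. Covering the ball of radius \(R\) in \(\R^{2\dimTrig}\) by a cubic lattice whose cells have the prescribed \(\ell^{2}\)-diameter, the number of cells that meet the ball is controlled by a ratio of volumes in which the volume of the unit Euclidean ball \(\mathrm{vol}\bigl(B_2^{2\dimTrig}\bigr)=\pi^{\dimTrig}/\Gamma(\dimTrig+1)=\pi^{\dimTrig}/\dimTrig!\) enters and does \emph{not} cancel. Substituting \(R=\sqrt{2}\,\trigLtwoBound\) and the scale \(\scAccuracy/\sqrt{2\dimTrig}\) produces a factor of the form \((2\pi)^{\dimTrig}\dimTrig^{2\dimTrig}/\dimTrig!\cdot(\trigLtwoBound/\scAccuracy)^{2\dimTrig}\); the crude weakening \(1/\dimTrig!\le 1/\dimTrig\) then collapses this to exactly \(\tfrac{1}{\dimTrig}\bigl(\tfrac{\pi\dimTrig^{2}}{2}\bigr)^{\dimTrig}\bigl(\tfrac{\scAccuracy}{2\trigLtwoBound}\bigr)^{-2\dimTrig}\).

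The step I expect to be the main obstacle is the Parseval comparison \(\lVert c\rVert_2\le\sqrt{2}\,\ltwonorm{f_c}\). The frequencies \(k\) and \(-k\) render the coefficient representation of Assumption \ref{ass:trigonometric poly}(ii) redundant (for instance \(\sin(2\pi\inprod{-k}{\uncertainParam})=-\sin(2\pi\inprod{k}{\uncertainParam})\)), so the synthesis map has a nontrivial kernel and \(\ltwonorm{f_c}\) cannot control an arbitrary representative \(c\). I would circumvent this by always choosing the minimal-norm (equivalently, symmetrised) coefficient representative, which lies in the orthogonal complement of the kernel where the synthesis map is injective and Parseval holds exactly; covering in the ambient \(\R^{2\dimTrig}\) rather than in this complement only inflates the exponent from \(\dimTrig\) to \(2\dimTrig\), which is harmless for an upper bound. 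Fixing the normalisation of \(\ltwonorm{\cdot}\) so that the relevant functions are genuinely orthonormal, and then tracking the grid-counting constants in the final step, is where the care is concentrated.
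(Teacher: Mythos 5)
Your route is genuinely different from the paper's. The paper observes that Assumption \ref{ass:trigonometric poly}(iii) places $\scRvSet$ inside the $\ltwospace$-ball $\boundPolynomSet{\trigLtwoBound}$ of the $(2\order+1)^{\uncertainDim}$-dimensional space of bandlimited trigonometric polynomials, imports the covering-number bound for that ball wholesale from \cite[p.~787]{ref:BasGro-04}, and then passes from the ball to the subset $\scRvSet$ via an elementary comparison (Lemma \ref{l: cov num subset}) of the form $\coveringNum{U}{\scAccuracy}\le\coveringNum{M}{\scAccuracy/2}$ --- which is exactly why the cited estimate is invoked at scale $\scAccuracy/2$. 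You instead reprove the ball estimate from scratch by transporting everything to coefficient space: the chain $\unifnorm{f_c}\le\lVert c\rVert_1\le\sqrt{2\dimTrig}\,\lVert c\rVert_2$ together with a Parseval comparison $\lVert c\rVert_2\lesssim\ltwonorm{f_c}$ and a volumetric count in $\R^{2\dimTrig}$ is indeed the standard engine behind bounds of this shape, and your handling of the $k\leftrightarrow -k$ redundancy and the normalisation of $\ltwonorm{\cdot}$ is the right instinct. What your approach buys is self-containedness; what it costs is that you now own every constant.

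That is where there is a genuine gap. First, the paper's definition of an $\scAccuracy$-cover requires the cover to be a subset $K'\subset K$ of the set being covered; the centres of your lattice cells are neither in $\scRvSet$ nor even of the form $\scCost(\scOptElement,\cdot)$ for some $\scOptElement\in\scOptSet$, so your external cover must be converted into an internal one, which (as in Lemma \ref{l: cov num subset}) costs a doubling of the scale, i.e.\ a factor of order $2^{2\dimTrig}=4^{\dimTrig}$ in the count. Second, the volume-ratio bound for the number of cells meeting the ball of radius $R$ produces $(R+\scAccuracy/\sqrt{2\dimTrig})^{2\dimTrig}$ rather than $R^{2\dimTrig}$, and discarding the increment is another factor that can be as large as $4^{\dimTrig}$ when $\scAccuracy$ is comparable to $\trigLtwoBound$. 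The only slack you reserve to absorb such losses is $1/\dimTrig!\le 1/\dimTrig$, and $\dimTrig!/\dimTrig$ equals $1$ for $\dimTrig\in\{1,2\}$ and only $2$ for $\dimTrig=3$, so it cannot pay for either factor when $\dimTrig$ is small. Consequently the claimed ``exact'' collapse to $\tfrac{1}{\dimTrig}\bigl(\tfrac{\pi\dimTrig^{2}}{2}\bigr)^{\dimTrig}\bigl(\tfrac{\scAccuracy}{2\trigLtwoBound}\bigr)^{-2\dimTrig}$ is not established by the sketch: you would need either to track these exponential factors and show they are absent (under the paper's definition of cover they are not), or to accept a weaker constant, or to do what the paper does and cite the sharp estimate for $\boundPolynomSet{\trigLtwoBound}$ directly.
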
 

Lemma \ref{l:trig covering num} in conjunction with Theorem \ref{th:gen prop} give us the following bound on the probability of the ``bad set'' $\badSet{\sampleSize}{\scAccuracy}$ defined in \eqref{eq: bad set def}.

\begin{theorem}
	\label{th:trig final}
	Consider the problem \eqref{eq:ROP} along with its associated data and suppose that Assumption \ref{ass:full support} holds. In addition, suppose that the family of functions $\scRvSet$ defined in \eqref{eq: Kf def} satisfies Assumption \ref{ass:trigonometric poly}, and define $\dimTrig \Let (2\order+1)^{\uncertainDim}$. Then, for $\badSetProb{\sampleSize}{\scAccuracy}$ as defined in \eqref{eq: bad set prob}, we have
	\[
		\badSetProb{\sampleSize}{\scAccuracy} \leq \frac{1}{\dimTrig} \biggl(\frac{\pi \dimTrig^2}{2}\biggr)^{\dimTrig} {\Big(\frac{\scAccuracy}{8\trigLtwoBound}\Big)}^{-2\dimTrig} \exp\Big(-\sampleSize \infTailProb\Big(\frac{\scAccuracy}{4}\Big)\Big).
	\]
\end{theorem}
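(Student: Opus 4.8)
The plan is to obtain the claimed inequality by composing the general estimate of Theorem \ref{th:gen prop} with the covering number bound of Lemma \ref{l:trig covering num}; since the substantive content of both ingredients has already been established, the remaining work amounts to verifying the hypotheses and carrying out an elementary substitution.

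First I would confirm that the standing hypotheses of Theorem \ref{th:gen prop} are satisfied under Assumption \ref{ass:trigonometric poly}. The uncertainty set $\uncertainSet = [-\hypercubeLen, \hypercubeLen]^{\uncertainDim}$ is a closed and bounded subset of $\R^{\uncertainDim}$, hence compact. Moreover, the trigonometric polynomials of bandwidth $\order$ form a $\dimTrig = (2\order+1)^{\uncertainDim}$-dimensional linear subspace of $\smoothFnSet{}{(\uncertainSet)}$, and Assumption \ref{ass:trigonometric poly}(iii) bounds $\scRvSet$ in the $L^2$ norm; since all norms are equivalent on a finite-dimensional space, $\scRvSet$ is a bounded subset of this subspace and is therefore precompact in $\smoothFnSet{}{(\uncertainSet)}$ (as already observed in the discussion preceding Lemma \ref{l:trig covering num}). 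Consequently Theorem \ref{th:gen prop} applies and yields
\[
	\badSetProb{\sampleSize}{\scAccuracy} \leq \coveringNum{\scRvSet}{\tfrac{\scAccuracy}{4}} \exp\Big(-\sampleSize \infTailProb\Big(\tfrac{\scAccuracy}{4}\Big)\Big).
\]

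It then remains to control the covering number $\coveringNum{\scRvSet}{\scAccuracy/4}$ appearing above. For this I would invoke Lemma \ref{l:trig covering num} at resolution $\scAccuracy/4$ in place of $\scAccuracy$, obtaining
\[
	\coveringNum{\scRvSet}{\tfrac{\scAccuracy}{4}} \leq \frac{1}{\dimTrig}\biggl(\frac{\pi \dimTrig^2}{2}\biggr)^{\dimTrig} {\left(\frac{\scAccuracy/4}{2\trigLtwoBound}\right)}^{-2\dimTrig} = \frac{1}{\dimTrig}\biggl(\frac{\pi \dimTrig^2}{2}\biggr)^{\dimTrig} {\left(\frac{\scAccuracy}{8\trigLtwoBound}\right)}^{-2\dimTrig},
\]
where the sole simplification is the identity $(\scAccuracy/4)/(2\trigLtwoBound) = \scAccuracy/(8\trigLtwoBound)$. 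Substituting this estimate into the displayed consequence of Theorem \ref{th:gen prop} produces exactly the asserted bound.

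Since both of the genuinely nontrivial steps are already in hand --- the reduction of $\badSetProb{\sampleSize}{\scAccuracy}$ to a covering-number-weighted exponential via the union bound in Theorem \ref{th:gen prop}, and the dimension-counting covering estimate in Lemma \ref{l:trig covering num} --- I expect no real obstacle here; the argument is essentially a one-line composition. The only points deserving a moment's care are the verification of precompactness, so that Theorem \ref{th:gen prop} legitimately applies, and the bookkeeping of resolutions: the factor $\scAccuracy/4$ carried inside the covering number by Theorem \ref{th:gen prop} must be tracked through the $1/(2\trigLtwoBound)$ of Lemma \ref{l:trig covering num} so as to yield the base $\scAccuracy/(8\trigLtwoBound)$ in the final estimate, while the exponent $\infTailProb(\scAccuracy/4)$ is inherited unchanged from Theorem \ref{th:gen prop}.
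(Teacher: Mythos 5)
Your proposal is correct and follows essentially the same route as the paper: the paper's proof is precisely the one-line substitution of the covering number estimate of Lemma \ref{l:trig covering num} (at resolution $\scAccuracy/4$, which turns the base $\scAccuracy/(2\trigLtwoBound)$ into $\scAccuracy/(8\trigLtwoBound)$) into the bound of Theorem \ref{th:gen prop}. Your additional verification of compactness of $\uncertainSet$ and precompactness of $\scRvSet$ mirrors the discussion the paper gives just before Lemma \ref{l:trig covering num}, so nothing is missing.
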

\begin{proof}
	The assertion follows readily after substituting the estimate for the covering number $\coveringNum{\scRvSet}{\scAccuracy}$ given in Lemma \ref{l:trig covering num} in Theorem \ref{th:gen prop}.
\end{proof}

\begin{remark}
As mentioned in Remark \ref{rem: pos alt form}, we can rewrite the result of Theorem \ref{th:trig final} in terms of the number of samples required to achieve a desired level of accuracy and confidence. As before, suppose we are given an accuracy level $\scAccuracy$ and a confidence level $\scConf$. If we draw 
\begin{align}
	\label{eq: trig sample size}
	\sampleSize(\scAccuracy, \scConf) \Let \frac{1}{\infTailProb(\frac{\scAccuracy}{4})} \left( \ln\Bigl(\frac{1}{\scConf}\Bigr) + 2\dimTrig \ln\Bigl({\frac{1}{\scAccuracy}}\Bigr) + (2\dimTrig-1) \ln{\dimTrig} + \dimTrig \ln\bigl(32\trigLtwoBound^2 \pi \bigr) \right).
\end{align}
number of i.i.d samples from $\uncertainSet$ under $\probMeasure$, then the probability of occurence of the bad set $\badSet{\sampleSize}{\scAccuracy}$ is guaranteed to be less than $\scConf$.

\end{remark}

\subsection{Scenario bounds for smooth functions on the \(n\)-torus} 

We apply Theorem \ref{th:gen prop} to the problem of determining bounds on the probability of the ``bad set'' $\badSet{\sampleSize}{\scAccuracy}$ when the uncertainty set is an $\uncertainDim$-dimensional torus $\torus{\uncertainDim}$ and the cost function \(\scCost\) is smooth with respect to the uncertain parameters. Formally, we ask:
\begin{assumption}
\label{ass:smooth periodic}
	In the context of the problem \eqref{eq:ROP} and its associated data, we stipulate that:
	\begin{enumerate}[label=\textup{(\roman*)}, align=right, widest=iii, leftmargin=*]
		\item $\uncertainSet = \torus{\uncertainDim}$;
		\item there exists an integer \(\smoothness > \frac{\uncertainDim}{2}\) such that for each $\scOptElement \in \scOptSet$, the function $\scCost(\scOptElement, \cdot) : \uncertainSet \lra \R$ is a $\smoothness$-times continuously differentiable function on $\uncertainSet$;
		\item $\sup_{\scOptElement \in \scOptSet} \ltwonorm{\scCost(\scOptElement, \cdot)} \teL \trigLtwoBound < +\infty$;
		\item $\sup_{\scOptElement \in \scOptSet} \sum_{i = 1}^{\uncertainDim} \ltwonorm{\frac{\partial^{\smoothness} \scCost(\scOptElement, \cdot)}{\partial \uncertainParam_i^{\smoothness}}} \teL \ltwoderbound < +\infty$.
	\end{enumerate}
\end{assumption}

In the following Lemma, whose proof is provided in Appendix \ref{app:cov num est}, we provide estimates of the covering number $\coveringNum{\scRvSet}{\frac{\scAccuracy}{2}}$ that show that under Assumption \ref{ass:smooth periodic}, Theorem \ref{th:gen prop} applies to $\scRvSet$. 

\begin{lemma}
	\label{l:smooth periodic covering num}
	Suppose that Assumption \ref{ass:smooth periodic} holds. Let \(\scRvSet\) be as defined in \eqref{eq: Kf def} and define
	$
		\dimTrig(\scAccuracy) \Let \left(2\left\lceil \torusConsP{12} \right\rceil+1\right)^{\uncertainDim}.
	$
	Then,
	\[
		\coveringNum{\scRvSet}{\frac{\scAccuracy}{4}} \leq \frac{1}{\dimTrig(\scAccuracy)} \biggl(\frac{\pi \dimTrig(\scAccuracy)^2}{2}\biggr)^{\dimTrig(\scAccuracy)} {\left(\frac{\scAccuracy}{24\trigLtwoBound}\right)}^{-2\dimTrig(\scAccuracy)}.
	\]
\end{lemma}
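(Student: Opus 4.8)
The plan is to reduce the smooth case to the already-established trigonometric-polynomial case (Lemma \ref{l:trig covering num}) by truncating Fourier series and then assembling a cover via the triangle inequality. Concretely, for each $\scOptElement \in \scOptSet$ write the Fourier expansion $\scCost(\scOptElement, \uncertainParam) = \sum_{k \in \Z^{\uncertainDim}} a_k(\scOptElement)\, e^{2\pi i \inprod{k}{\uncertainParam}}$ (the conjugate symmetry $a_{-k}(\scOptElement) = \overline{a_k(\scOptElement)}$ keeps everything real), and let $P_D$ denote truncation to the frequencies $\set{k \suchthat \infnorm{k} \leq D}$. The image $P_D \scRvSet \Let \set{P_D \scCost(\scOptElement, \cdot) \suchthat \scOptElement \in \scOptSet}$ is a bounded subset of the space of trigonometric polynomials of bandwidth $D$, so it satisfies Assumption \ref{ass:trigonometric poly} and Lemma \ref{l:trig covering num} applies to it. First I would choose $D$ so that $\unifnorm{\scCost(\scOptElement, \cdot) - P_D \scCost(\scOptElement, \cdot)} \leq \scAccuracy/6$ uniformly in $\scOptElement$; covering $P_D \scRvSet$ at scale $\scAccuracy/12$ and using the triangle inequality then yields a cover of $\scRvSet$ at scale $\scAccuracy/6 + \scAccuracy/12 = \scAccuracy/4$, which is exactly the scale demanded by the lemma.

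The analytic heart of the argument is the uniform truncation estimate. Since $\unifnorm{\scCost(\scOptElement, \cdot) - P_D \scCost(\scOptElement, \cdot)} \leq \sum_{\infnorm{k} > D} |a_k(\scOptElement)|$, I would split each coefficient against the weight $w(k) \Let \sum_{i=1}^{\uncertainDim} k_i^{2\smoothness}$ and apply Cauchy--Schwarz:
\[
	\sum_{\infnorm{k} > D} |a_k(\scOptElement)| \leq \Bigl(\sum_{\infnorm{k} > D} \tfrac{1}{w(k)}\Bigr)^{1/2} \Bigl(\sum_{k \in \Z^{\uncertainDim}} w(k)\, |a_k(\scOptElement)|^2\Bigr)^{1/2}.
\]
By Parseval the second factor equals $(2\pi)^{-\smoothness}\bigl(\sum_{i=1}^{\uncertainDim} \ltwonorm{\frac{\partial^{\smoothness}\scCost(\scOptElement,\cdot)}{\partial\uncertainParam_i^{\smoothness}}}^{2}\bigr)^{1/2}$, which is at most $(2\pi)^{-\smoothness}\ltwoderbound$ by Assumption \ref{ass:smooth periodic}(iv) (using $\ell^2 \leq \ell^1$). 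For the first factor I would use $w(k) \geq \infnorm{k}^{2\smoothness}$ to reduce to a one-parameter shell sum $\sum_{m > D} \#\set{k \suchthat \infnorm{k} = m}\, m^{-2\smoothness}$; bounding the shell count by $O(m^{\uncertainDim - 1})$ leaves a tail $\sum_{m>D} m^{\uncertainDim - 1 - 2\smoothness}$ that converges and decays like $D^{(\uncertainDim - 2\smoothness)/2}$ after taking the square root. This is exactly where the hypothesis $\smoothness > \uncertainDim/2$ is indispensable: it is the Sobolev embedding threshold that makes $\sum_k w(k)^{-1}$ finite.

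With the resulting estimate $\unifnorm{\scCost(\scOptElement,\cdot) - P_D\scCost(\scOptElement,\cdot)} \leq C(\uncertainDim,\smoothness)\, \ltwoderbound\, D^{(\uncertainDim-2\smoothness)/2}$, solving $C(\uncertainDim,\smoothness)\,\ltwoderbound\, D^{(\uncertainDim-2\smoothness)/2} \leq \scAccuracy/6$ for $D$ yields the threshold $\torusConsP{12}$ appearing in the statement, so that $\dimTrig(\scAccuracy) = (2\lceil\torusConsP{12}\rceil + 1)^{\uncertainDim}$ is precisely the dimension of the bandwidth-$D$ polynomial space. Two checks then remain. First, $P_D\scRvSet$ is bounded by $\trigLtwoBound$ in $\ltwonorm{\cdot}$, since truncation is an orthogonal projection, whence $\ltwonorm{P_D\scCost(\scOptElement,\cdot)} \leq \ltwonorm{\scCost(\scOptElement,\cdot)} \leq \trigLtwoBound$ by Assumption \ref{ass:smooth periodic}(iii); this is what lets Lemma \ref{l:trig covering num} apply with the same constant $\trigLtwoBound$. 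Second, substituting $\dimTrig = \dimTrig(\scAccuracy)$ and covering scale $\scAccuracy/12$ into Lemma \ref{l:trig covering num} turns its factor $\bigl((\scAccuracy/12)/(2\trigLtwoBound)\bigr)^{-2\dimTrig(\scAccuracy)} = \bigl(\scAccuracy/(24\trigLtwoBound)\bigr)^{-2\dimTrig(\scAccuracy)}$, producing exactly the claimed bound.

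The main obstacle I anticipate is the clean execution of the tail estimate in the second paragraph: obtaining an honest decay rate in $D$ while respecting that Assumption \ref{ass:smooth periodic} controls only the pure partial derivatives $\partial^{\smoothness}/\partial\uncertainParam_i^{\smoothness}$ rather than all mixed derivatives of order $\smoothness$. The weight $w(k) = \sum_i k_i^{2\smoothness}$ (rather than $\infnorm{k}^{2\smoothness}$ or a full Sobolev weight) is tailored precisely to this restricted control, and the inequality $w(k) \geq \infnorm{k}^{2\smoothness}$ is what lets the shell-counting argument go through; tracking the dimensional constant $C(\uncertainDim,\smoothness)$ is routine but must be done carefully to pin down the constant $12$ (equivalently $24$) in the final statement.
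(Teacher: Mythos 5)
Your overall strategy is the same as the paper's: truncate the Fourier series at a bandwidth $D$ chosen via a uniform tail estimate, reduce to Lemma \ref{l:trig covering num} for the truncated family, and prove the tail estimate by Cauchy--Schwarz against a weight built from the pure partial derivatives followed by shell counting over $\set{k : \infnorm{k} = m}$, with $\smoothness > \uncertainDim/2$ making the tail summable. Your weighted form of Cauchy--Schwarz with $w(k) = \sum_i k_i^{2\smoothness}$ and the bound $w(k) \geq \infnorm{k}^{2\smoothness}$ is a clean, essentially equivalent packaging of the paper's Lemma \ref{l: freq lim approx torus}, and your observation that truncation is an orthogonal projection (so the $\ltwospace$-bound $\trigLtwoBound$ survives) matches the paper's Plancherel step.

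There is, however, a bookkeeping gap in the covering argument. The paper's definition of $\coveringNum{K}{\scAccuracy}$ requires the cover to be a \emph{subset} of $K$, and this is not cosmetic: the proof of Theorem \ref{th:gen prop} needs representatives $\scOptElement_j \in \scOptSet$, i.e.\ actual members $\scCost(\scOptElement_j,\cdot)$ of $\scRvSet$. Your accounting $\scAccuracy/6 + \scAccuracy/12 = \scAccuracy/4$ produces a cover of $\scRvSet$ by truncated polynomials $P_D\scCost(\scOptElement_j,\cdot)$, which do not lie in $\scRvSet$; to return to an internal cover you must pay the truncation error a second time via $\unifnorm{\scCost(\scOptElement,\cdot)-\scCost(\scOptElement_j,\cdot)} \leq \unifnorm{\scCost(\scOptElement,\cdot)-P_D\scCost(\scOptElement,\cdot)} + \unifnorm{P_D\scCost(\scOptElement,\cdot)-P_D\scCost(\scOptElement_j,\cdot)} + \unifnorm{P_D\scCost(\scOptElement_j,\cdot)-\scCost(\scOptElement_j,\cdot)}$, which with your choices gives $\scAccuracy/6+\scAccuracy/12+\scAccuracy/6 = 5\scAccuracy/12 > \scAccuracy/4$. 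The paper resolves this with its Lemma \ref{l:covering num 2} ($\coveringNum{U}{3\scAccuracy} \leq \coveringNum{\phi(U)}{\scAccuracy}$), taking \emph{both} the truncation error and the cover scale equal to $\scAccuracy/12$ so that $3\cdot\scAccuracy/12 = \scAccuracy/4$; this is precisely what forces the threshold $\torusConsP{12}$ (hence the stated $\dimTrig(\scAccuracy)$) and the constant $24 = 2\cdot 12$ in the final bound. As written, your choice of truncation error $\scAccuracy/6$ is inconsistent with the threshold $\torusConsP{12}$ you quote, and the constants do not close; switching to the three-term triangle inequality with both scales at $\scAccuracy/12$ repairs the argument and recovers the lemma exactly.
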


Lemma \ref{l:smooth periodic covering num} in conjunction with Theorem \ref{th:gen prop} give us the following bound on the probability of the ``bad set'' $\badSet{\sampleSize}{\scAccuracy}$.

\begin{theorem}
	\label{th:smooth periodic final}
	Consider the problem \eqref{eq:ROP} along with its associated data and suppose that Assumption \ref{ass:full support} holds. In addition, suppose that the family of functions $\scRvSet$ defined in \eqref{eq: Kf def} satisfies Assumption \ref{ass:smooth periodic}. Then for $\badSetProb{\sampleSize}{\scAccuracy}$ as defined in \eqref{eq: bad set prob}, we have
	\begin{equation}
		\badSetProb{\sampleSize}{\scAccuracy} \leq \frac{1}{\dimTrig(\scAccuracy)} \biggl(\frac{\pi \dimTrig(\scAccuracy)^2}{2}\biggr)^{\dimTrig(\scAccuracy)} {\Big(\frac{\scAccuracy}{24\trigLtwoBound}\Big)}^{-2\dimTrig(\scAccuracy)} \exp\Big(-\sampleSize \infTailProb\Big(\frac{\scAccuracy}{4}\Big)\Big),
	\end{equation}
	where $\dimTrig(\scAccuracy) = \left(2\left\lceil \torusConsP{12} \right\rceil+1\right)^{\uncertainDim}$.
\end{theorem}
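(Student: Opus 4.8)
The plan is to obtain the assertion as a direct consequence of the general bound in Theorem~\ref{th:gen prop} together with the covering number estimate of Lemma~\ref{l:smooth periodic covering num}; no new probabilistic argument is needed beyond what has already been established. The only thing that must be checked before Theorem~\ref{th:gen prop} can be invoked is that its two structural hypotheses hold in the present setting: that $\uncertainSet$ is compact and that $\scRvSet \subset \smoothFnSet{}{(\uncertainSet)}$ is precompact in the supremum norm.

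Compactness of $\uncertainSet = \torus{\uncertainDim}$ is immediate. For the precompactness of $\scRvSet$, I would appeal to the standard characterization recalled in the Background subsection: a subset of a metric space is precompact precisely when its covering number $\coveringNum{\scRvSet}{\scAccuracy}$ is finite for every $\scAccuracy > 0$. Lemma~\ref{l:smooth periodic covering num} supplies exactly such a finite bound at the scale $\scAccuracy/4$, and the same Fourier-truncation argument furnishes a finite cover at every scale; hence $\scRvSet$ is precompact and Theorem~\ref{th:gen prop} applies verbatim, yielding $\badSetProb{\sampleSize}{\scAccuracy} \leq \coveringNum{\scRvSet}{\scAccuracy/4}\, \exp\bigl(-\sampleSize\,\infTailProb(\scAccuracy/4)\bigr)$.

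It then remains only to substitute the bound $\coveringNum{\scRvSet}{\scAccuracy/4} \leq \tfrac{1}{\dimTrig(\scAccuracy)}\bigl(\tfrac{\pi \dimTrig(\scAccuracy)^2}{2}\bigr)^{\dimTrig(\scAccuracy)} (\scAccuracy/(24\trigLtwoBound))^{-2\dimTrig(\scAccuracy)}$ from Lemma~\ref{l:smooth periodic covering num} into the displayed inequality. This produces the claimed estimate with $\dimTrig(\scAccuracy) = (2\lceil \torusConsP{12}\rceil + 1)^{\uncertainDim}$ directly, completing the proof; the theorem is thus purely mechanical once the lemma is in hand.

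I anticipate that the genuinely substantive work lies not in this theorem but in Lemma~\ref{l:smooth periodic covering num}, which is where the main obstacle sits. The delicate point there is to convert the smoothness hypothesis into a quantitative sup-norm covering estimate: one expands each $\scCost(\scOptElement, \cdot)$ in its Fourier series on $\torus{\uncertainDim}$, truncates to bandwidth $\order \approx \lceil \torusConsP{12}\rceil$, and controls the sup-norm truncation error \emph{uniformly} in $\scOptElement$. The condition $\smoothness > \uncertainDim/2$ is precisely the Sobolev threshold that guarantees absolute and uniform convergence of these Fourier series, so that the discarded high-frequency tail is summable in the supremum norm and can be driven below a prescribed fraction of $\scAccuracy$ by the stated choice of bandwidth (using the derivative bound $\ltwoderbound$ together with Parseval and Cauchy--Schwarz). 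Once the problem has been reduced to covering a bounded subset of the finite-dimensional trigonometric-polynomial space of dimension $\dimTrig(\scAccuracy)$, the estimate of Lemma~\ref{l:trig covering num} takes over and produces the displayed factor. This reduction --- trading smoothness for an explicit finite bandwidth with controlled error --- is the crux of the argument.
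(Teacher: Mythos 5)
Your proposal is correct and follows exactly the paper's own route: verify that Theorem~\ref{th:gen prop} applies (compactness of $\torus{\uncertainDim}$ and precompactness of $\scRvSet$, the latter following from the finiteness of the covering numbers supplied by Lemma~\ref{l:smooth periodic covering num} at every scale) and then substitute the covering number bound from that lemma into the general estimate. Your closing remarks correctly identify that the substantive content lives in the covering number lemma rather than in this theorem, whose proof in the paper is likewise a one-line substitution.
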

\begin{proof}
	The result follows by substituting the estimate for the covering number $\coveringNum{\scRvSet}{\scAccuracy}$ given in Lemma \ref{l:trig covering num} in Theorem \ref{th:gen prop}.
\end{proof}

\begin{remark}
	As mentioned in Remark \ref{rem: pos alt form}, we can rewrite the result of Theorem \ref{th:smooth periodic final} in terms of the number of samples required to achieve a desired level of accuracy and confidence. As before, suppose we are given an accuracy level $\scAccuracy$ and a confidence level $\scConf$. If we draw 
	\[
		\sampleSize(\scAccuracy, \scConf) \Let \frac{1}{\infTailProb(\frac{\scAccuracy}{4})} \left( \ln\Bigl(\frac{1}{\scConf}\Bigr) + 2\dimTrig(\scAccuracy) \ln\Bigl({\frac{1}{\scAccuracy}}\Bigr) + (2\dimTrig(\scAccuracy)-1) \ln{\dimTrig(\scAccuracy)} + \dimTrig(\scAccuracy) \ln\bigl(72\ltwobound^2 \pi \bigr) \right).
	\]
	number of i.i.d samples from $\uncertainSet$ under $\probMeasure$, then the probability of occurence of the bad set $\badSet{\sampleSize}{\scAccuracy}$ is guaranteed to be less than $\scConf$.
\end{remark}

	\appendix

	\section{Proofs of Lemma \ref{l:ess sup equivalence} and Proposition \ref{p:compact consistent}}
		\label{app:proofs}
\begin{proof}[Proof of Lemma \ref{l:ess sup equivalence}]
We first show that $\esssup_{\uncertainParam \in \uncertainSet} \scCost(\scOptElement, \uncertainParam) \leq \sup_{\uncertainParam \in \mSupp{\uncertainSet}} \scCost(\scOptElement, \uncertainParam)$. Indeed, since
\begin{align*}
\mSupp{\uncertainSet} \subset \set[\bigg]{ \uncertainParam \in \uncertainSet \ \suchthat \ \scCost(\scOptElement, \uncertainParam) \leq \ \sup_{\uncertainParam \in \mSupp{\uncertainSet}} \scCost(\scOptElement, \uncertainParam) },
\end{align*}
we have
\begin{align*}
 \probMeasure\biggl(\set[\bigg]{ \uncertainParam \in \uncertainSet \ \suchthat \ \scCost(\scOptElement, \uncertainParam) \leq \ \sup_{\uncertainParam \in \mSupp{\uncertainSet}} \scCost(\scOptElement, \uncertainParam) } \biggr) \geq \probMeasure(\mSupp{\uncertainSet}) = 1,
\end{align*}
which implies that
\begin{align*}
\esssup_{\uncertainParam \in \uncertainSet} \scCost(\scOptElement, \uncertainParam) \leq \sup_{\uncertainParam \in \mSupp{\uncertainSet}} \scCost(\scOptElement, \uncertainParam). 
\end{align*}
Now we demonstrate that $\esssup_{\uncertainParam \in \uncertainSet} \scCost(\scOptElement, \uncertainParam) + \delta \geq \sup_{\uncertainParam \in \mSupp{\uncertainSet}} \scCost(\scOptElement, \uncertainParam)$ for any $\delta > 0$. Indeed, by definition of the essential supremum,
\begin{align*}
\probMeasure\biggl( \set[\bigg]{ \uncertainParam \in \uncertainSet \ | \ \scCost(\scOptElement, \uncertainParam) \leq \ \esssup_{\uncertainParam \in \uncertainSet} \scCost(\scOptElement, \uncertainParam) + \delta} \biggr) = 1.
\end{align*}
Lower semicontinuity of $\scCost$ shows that the set \[ \set[\bigg]{ \uncertainParam \in \uncertainSet \suchthat \scCost(\scOptElement, \uncertainParam) \leq \ \esssup_{\uncertainParam \in \uncertainSet} \scCost(\scOptElement, \uncertainParam) + \delta} \] is closed. Since $\mSupp{\uncertainSet}$ is the smallest closed set of probability 1,
\begin{align*}
\mSupp{\uncertainSet} \subset \set[\bigg]{ \uncertainParam \in \uncertainSet \ \suchthat \ \scCost(\scOptElement, \uncertainParam) \leq \ \esssup_{\uncertainParam \in \uncertainSet} \scCost(\scOptElement, \uncertainParam) + \delta},
\end{align*}
which in turn means that
\begin{equation*}
\sup_{\uncertainParam \in \mSupp{\uncertainSet}} \scCost(\scOptElement, \uncertainParam) \leq \esssup_{\uncertainParam \in \uncertainSet} \scCost(\scOptElement, \uncertainParam) + \delta.
\end{equation*}
Since the preceding statement is valid for any $\delta > 0$, we have \[ \esssup_{\uncertainParam \in \uncertainSet} \scCost(\scOptElement, \uncertainParam) \geq \sup_{\uncertainParam \in \mSupp{\uncertainSet}} \scCost(\scOptElement, \uncertainParam), \]
yielding the assertion and completing the proof.
\end{proof}
	
\begin{proof}[Proof of Proposition \ref{p:compact consistent}]
	Firstly, we show that for each fixed $\scOptElement \in \scOptSet$ the set
	\begin{equation*}
		\set[\big] { \uncertainParam \in \uncertainSet \suchthat \scCost(\scOptElement, \uncertainParam) > \ropOptSolution - \scAccuracy } \subset \uncertainSet
	\end{equation*}
	is open and nonempty.
	Since $\scCost$ is l.s.c, this set is clearly open since its complement is the sublevel set of a l.s.c function with the variable $\scOptElement$ held fixed. By definition of the supremum, there exists $\uncertainParam \in \uncertainSet$ such that $\scCost(\scOptElement, \uncertainParam) > \marginalFunction(\scOptElement) - \scAccuracy$. Along with the fact that $ \marginalFunction(\scOptElement) \geq \ropOptSolution$ for all $\scOptElement$, this implies that the set is also nonempty. By Assumption \ref{ass:full support} we see that the tail probability $\tailProb(\scOptElement, \scAccuracy) = \probMeasure \left( \set{ \uncertainParam \in \uncertainSet \suchthat \scCost(x, \uncertainParam) > \ropOptSolution - \epsilon}  \right) > 0$.
	
	Second, we show that the tail probability $\tailProb(\scOptElement, \uncertainParam)$ is l.s.c in $\scOptElement$. To this end, fix $\scAccuracy \in [0, 1]$ and observe that for each fixed $\scOptElement \in \scOptSet$,
	\begin{equation*}
		\tailProb(\scOptElement, \scAccuracy) = \probMeasure \left( \scCost(\scOptElement, \cdot) > \ropOptSolution - \scAccuracy \right) = \Expec{\indic{\set{\scCost(\scOptElement, \cdot) > \ropOptSolution - \scAccuracy}}}.
	\end{equation*} 
	Fix $\scOptElement \in \scOptSet$ and pick a sequence $(\scOptElement_n)_{n = 1}^{+\infty}$ in $\scOptSet$ converging to $\scOptElement$, i.e., $ \lim_{n \to +\infty} \scOptElement_n = \scOptElement$. We claim that
	\begin{equation*}
		\liminf_{n \to +\infty} \indic{\set{\scCost(\scOptElement_n, \cdot) > \ropOptSolution - \scAccuracy}} \geq \indic{\set{\scCost(\scOptElement, \cdot) > \ropOptSolution - \scAccuracy}}.
	\end{equation*}
		Fix $\uncertainParam \in \uncertainSet$. If $ \scCost(\scOptElement, \uncertainParam) \leq \ropOptSolution - \scAccuracy$, then the preceding inequality is obvious, so let us assume that $\scCost(\scOptElement, \uncertainParam) > \ropOptSolution - \scAccuracy$. By lower semicontinuity of $f$,
		\begin{align*}
			&\liminf_{n \to +\infty} \scCost(\scOptElement_n, \uncertainParam) \geq \scCost(\scOptElement, \uncertainParam) > \ropOptSolution - \scAccuracy, 
		\end{align*}
	which means that for all $n$ sufficiently large, 
		\begin{align*}
		\scCost(\scOptElement_n, \uncertainParam) > \ropOptSolution - \scAccuracy, 
		\end{align*}
	and this in turn means
		\begin{align*}
		& \liminf_{n \to +\infty} \indic{\set{\scCost(\scOptElement_n, \cdot) > \ropOptSolution - \scAccuracy}} = 1 = \indic{\set{\scCost(\scOptElement, \cdot) > \ropOptSolution - \scAccuracy}}.
		\end{align*}
	It follows that
	\begin{equation*}
		\Expec{\liminf_{n \to +\infty} \indic{(\scCost(\scOptElement_n, \cdot) > \ropOptSolution - \scAccuracy)}} \geq \Expec{\indic{(\scCost(\scOptElement, \cdot) > \ropOptSolution - \scAccuracy)}}.
	\end{equation*}
	By Fatou's lemma \cite[Lemma 8.1, Chapter IV]{ref:Dib-16} we have
	\begin{equation*}
		\liminf_{n \to +\infty} \Expec{\indic{\set{\scCost(\scOptElement_n, \cdot) > \ropOptSolution - \scAccuracy}}} \geq \Expec{\liminf_{n \to \infty} \indic{\set{\scCost(\scOptElement_n, \uncertainParam) > \ropOptSolution - \scAccuracy}}},
	\end{equation*}
	and combining the inequalities we see that
	\begin{align*}
		&\liminf_{n \to +\infty} \Expec{\indic{\set{\scCost(\scOptElement_n, \cdot) > \ropOptSolution - \scAccuracy}}} \geq \Expec{\indic{\set{\scCost(\scOptElement, \cdot) > \ropOptSolution - \scAccuracy}}},
	\end{align*}
		thereby establishing that $\liminf_{n \to +\infty} \tailProb(\scOptElement_n, \scAccuracy) \geq \tailProb(\scOptElement, \scAccuracy).$ Since $(\scOptElement_n)_{n = 1}^{+\infty}$ and $\scOptElement$ were arbitrary, lower semicontinuity of $\tailProb(\cdot, \scAccuracy)$ follows for each fixed $\scAccuracy$. This completes the proof of the first claim.
	\par Finally, since $\tailProb(\cdot, \scAccuracy)$ is l.s.c for each fixed $\scAccuracy$, it attains its minimum on any compact subset of $\scOptSet$ by Weierstrass' theorem \cite[Theorem 7.1, Chapter II]{ref:Dib-16} , which proves the second statement of our theorem.
\end{proof}

	\section{Proofs of Lemma \ref{l:trig covering num} and Lemma \ref{l:smooth periodic covering num}}
		\label{app:cov num est}
\begin{proof}[Proof of Lemma \ref{l:trig covering num}]
We estimate the covering number $\coveringNum{\scRvSet}{\scAccuracy}$ under the conditions of Assumption \ref{ass:trigonometric poly}. To start, we define the set of trigonometric polynomial of bandwidth $\order$:
\begin{equation*}
\polynomSet \Let \set[\Bigg] { p : \uncertainSet \lra \R \suchthat p(\uncertainParam) = \sum_{k \in [-\order, \order]\cap \Z^{\uncertainDim}} a_{k} \sin\left(2\pi \inprod{k}{\uncertainParam}\right) + b_{k} \cos\left(2\pi \inprod{k}{\uncertainParam}\right) },
\end{equation*}
and define the $\ltwospace$-ball of radius $\trigLtwoBound$ in $\polynomSet$ by 
\[
	\boundPolynomSet{\trigLtwoBound} \Let \set[\big] { p \in \polynomSet \suchthat \ltwonorm{p} \leq \trigLtwoBound},
\]
where the \(\ltwospace\)-norm is defined in the standard way.

The following technical lemma is needed to prove our estimate of the covering number $\coveringNum{\scRvSet}{\scAccuracy}$:

\begin{lemma}
	\label{l: cov num subset}
	Let $(M, \genMetric)$ be a metric space and let $U \subset M$ be a subset. Then, for each $\scAccuracy > 0$,
	\[
		\coveringNum{U}{\scAccuracy} \leq 2\coveringNum{M}{\frac{\scAccuracy}{2}}.
	\]
\end{lemma}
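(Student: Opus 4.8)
The plan is to exploit the fact that the covering number employed in this paper is \emph{internal}: an $\scAccuracy$-cover of a set must itself be a subset of that set. Consequently one cannot simply reuse an $\frac{\scAccuracy}{2}$-cover of $M$ as a cover of $U$, since its points may fall outside $U$; the remedy is to replace each relevant point by a nearby representative drawn from $U$ and to absorb the displacement via the triangle inequality. This is precisely where the halving of the radius encoded in $\coveringNum{M}{\frac{\scAccuracy}{2}}$ is spent.

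First I would dispose of the trivial case: if $\coveringNum{M}{\frac{\scAccuracy}{2}} = +\infty$ there is nothing to prove, so assume it equals some finite $k \in \Nz$ and fix an $\frac{\scAccuracy}{2}$-cover $\set{m_1, \ldots, m_k} \subset M$ of $M$. The closed balls of radius $\frac{\scAccuracy}{2}$ centred at the points $m_j$ cover $M$, and hence in particular cover $U$. Next, for every index $j$ for which the ball $\set{z \in M \suchthat \genMetric(z, m_j) \leq \frac{\scAccuracy}{2}}$ meets $U$, I would select a single representative point $u_j$ in that intersection; the collection $U'$ of all such $u_j$ is by construction a subset of $U$ of cardinality at most $k$.

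It then remains to verify that $U'$ is an $\scAccuracy$-cover of $U$. Given any $u \in U$, the covering property of $\set{m_1, \ldots, m_k}$ furnishes an index $j$ with $\genMetric(u, m_j) \leq \frac{\scAccuracy}{2}$; in particular the corresponding ball meets $U$, since it contains $u$, so $u_j$ is defined and satisfies $\genMetric(u_j, m_j) \leq \frac{\scAccuracy}{2}$. The triangle inequality then gives
\[
	\genMetric(u, u_j) \leq \genMetric(u, m_j) + \genMetric(m_j, u_j) \leq \frac{\scAccuracy}{2} + \frac{\scAccuracy}{2} = \scAccuracy,
\]
so that $u$ lies within $\scAccuracy$ of a point of $U'$. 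Hence $\coveringNum{U}{\scAccuracy} \leq |U'| \leq k = \coveringNum{M}{\frac{\scAccuracy}{2}} \leq 2\coveringNum{M}{\frac{\scAccuracy}{2}}$, which is in fact sharper than the asserted bound.

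The only genuine subtlety — and the step I would flag as the crux — is the representative-selection step, since it is what guarantees the existence of an \emph{internal} cover of $U$ at all and what consumes the radius budget; everything else is a direct triangle-inequality estimate. It is worth remarking that this argument already yields the inequality without the factor of $2$, so the stated constant is merely a convenient, non-tight relaxation that one may retain for safety.
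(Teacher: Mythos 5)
Your proof is correct and follows essentially the same route as the paper's: both take an $\frac{\scAccuracy}{2}$-cover of $M$, replace each cover point whose $\frac{\scAccuracy}{2}$-ball meets $U$ by a representative inside $U$, and conclude via the triangle inequality. The paper's own argument likewise delivers the sharper bound $\coveringNum{U}{\scAccuracy} \leq \coveringNum{M}{\frac{\scAccuracy}{2}}$, so your observation that the factor of $2$ is slack matches what is implicit there.
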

\begin{proof}
	Pick $U \subset M$ and let $(a_i)_{i=1}^{\coveringNum{M}{\frac{\scAccuracy}{2}}} \subset M$ be an $\frac{\scAccuracy}{2}$-cover of $M$. After reordering these points if necessary, suppose that for $i = 1, \ldots, p \leq \coveringNum{M}{\frac{\scAccuracy}{2}}$, there exists a $b_i \in U$ such that $\genMetric(a_i, b_i) < \frac{\scAccuracy}{2}$ and that for each $i > p$ there exists no $b \in U$ such that $\genMetric(a_i, b) < \frac{\scAccuracy}{2}$. We claim that $(b_i)_{i=1}^{p} \subset U$ is an $\scAccuracy$-cover of $U$. Take an arbitrary element $b \in U$. By definition of a cover, there exists an $a_i$ with $i \leq p$ such that $\genMetric(a_i, b) < \frac{\scAccuracy}{2}$. We also know that $\genMetric(a_i, b_i) \leq \frac{\scAccuracy}{2}$. Then, by the triangle inequality, \[ \genMetric(b, b_i) \leq \genMetric(b, a_i) + \genMetric(a_i, b_i) \leq \scAccuracy. \] Therefore, $(b_i)_{i=1}^{p} \subset U$ is an $\scAccuracy$-cover of $U$ and consequently 
	\[
		\coveringNum{U}{\scAccuracy} \leq p \leq \coveringNum{M}{\frac{\scAccuracy}{2}}.\qedhere
	\]
\end{proof}

	Continuing with our proof of Lemma \ref{l:trig covering num}, we see that the conditions of Assumption \ref{ass:trigonometric poly} are equivalent to saying that $\scRvSet \subset \boundPolynomSet{\trigLtwoBound}$, and we know \cite[pg. 787]{ref:BasGro-04} that 
	\[
		\coveringNum{\boundPolynomSet{\trigLtwoBound}}{\frac{\scAccuracy}{2}} \leq \frac{1}{\dimTrig} \biggl(\frac{\pi \dimTrig^2}{2}\biggr)^{\dimTrig} {\left(\frac{\scAccuracy}{2\trigLtwoBound}\right)}^{-2\dimTrig}.
	\]
	These observations in conjunction with Lemma \ref{l: cov num subset} proves that
	\begin{equation*}
		\coveringNum{\scRvSet}{\scAccuracy} \leq \frac{1}{\dimTrig} \biggl(\frac{\pi \dimTrig^2}{2}\biggr)^{\dimTrig} {\left(\frac{\scAccuracy}{2\trigLtwoBound}\right)}^{-2\dimTrig},
	\end{equation*}
	as asserted.
\end{proof}

The rest of this section will be devoted to proving Lemma \ref{l:smooth periodic covering num} that provides the key estimate of the covering number $\coveringNum{\scRvSet}{\scAccuracy}$, where $\scRvSet$, the family of functions defined in \eqref{eq: Kf def}, satisfies Assumptions \ref{ass:smooth periodic}. We will need some preliminaries on Fourier analysis on tori in order to determine these estimates. 

We realize \(\torus{1}\) as the quotient \(\R/\Z\); this way we get on \(\torus{\uncertainDim}\) a measure, henceforth denoted by $\lebMeas$, induced by the Lebesgue measure on \(\R\). If $\mathcal{L}$ is the \(\R\)-vector space of measurable functions $g: \torus{\uncertainDim} \lra \R$ such that
\begin{equation*}
\int_{\torus{\uncertainDim}} |g(x)|^2 \,\dd \lebMeas  < +\infty,
\end{equation*}
then by identifying functions in \(\mathcal L\) that differ on sets of \(\lebMeas\)-measure \(0\), we get the space $\ltwospace(\torus{\uncertainDim})$ of square-integrable \(\R\)-valued functions on \(\torus{\uncertainDim}\). 
It is a standard fact that $\ltwospace(\torus{\uncertainDim})$ is a Hilbert space when equipped with the inner product
\begin{equation*}
\ltwospace(\torus{\uncertainDim}) \times \ltwospace(\torus{\uncertainDim}) \ni (g_1, g_2) \mapsto \inprod{g_1}{g_2} \Let \int_{\torus{\uncertainDim}} g_1(x)g_2(x) \,\dd \lebMeas,
\end{equation*}
and the corresponding induced $\ltwospace$-norm $\Ltwonorm{g}{\torus{\uncertainDim}}$ is defined by $\Ltwonorm{g}{\torus{\uncertainDim}} \Let \sqrt{\inprod{g}{g}}$. For a positive integer \(\smoothness\) we denote \(\smoothness\)-times continuously differentiable functions on the torus \(\torus{\uncertainDim}\) by \(\smoothFnSet{\smoothness}{(\torus{\uncertainDim})}\).

For $\multiindex = (\multiindex_1, \ldots, \multiindex_{\uncertainDim}) \in \Z^{\uncertainDim}$ the \(\multiindex\)-th Fourier coefficients of $g \in \ltwospace(\torus{\uncertainDim})$ is defined by
\begin{equation}\label{eq:fourier}
\fourierCoeff{g}{\multiindex} \Let \int_{\torus{\uncertainDim}} g(x) \,\epower{-2\pi i\inprod{\multiindex}{x}}\,\dd \lebMeas,
\end{equation}
which permit us to represent \(g\) via its Fourier series given by
\[
\fourierSer{g}(x) \Let \sum_{\multiindex \in \Z^{\uncertainDim}} \fourierCoeff{g}{\multiindex} \,\epower{2\pi i\inprod{\multiindex}{x}},
\]
where the convergence of the aforementioned sum is understood in the $\ltwospace$-norm sense. It is well known \cite{ref:DymKea-72} that when $g \in \smoothFnSet{1}{(\torus{\uncertainDim})}$, then its Fourier series converges pointwise and $g = \fourierSer{g}(x)$. Moreover, the following Plancherel identity is valid for all $g \in \ltwospace(\torus{\uncertainDim})$:
\begin{equation}\label{eq: plancherel}
\Ltwonorm{g}{\torus{\uncertainDim}}^2 = \sum_{\multiindex \in \Z^{\uncertainDim}} |\fourierCoeff{g}{\multiindex}|^2.
\end{equation}
If $g \in \ltwospace(\torus{\uncertainDim})\cap \smoothFnSet{1}{(\torus{\uncertainDim})}$, then the Fourier series of $g$ is related to that of its partial derivative $\frac{\partial{g}}{\partial \uncertainParam_j}$ along the \(j\)-th direction by the formula
\begin{equation}\label{eq:der four}
\fourierCoeff{\frac{\partial{g}}{\partial \uncertainParam_j}}{\multiindex} = -2\pi i\multiindex_j \fourierCoeff{g}{\multiindex}.
\end{equation}
If \(\smoothness\) is a positive integer and \(g\in \ltwospace(\torus{\uncertainDim})\cap \smoothFnSet{\smoothness}{(\torus{\uncertainDim})}\), then applying the preceding formula repeatedly $\smoothness$-times we get
\begin{equation}\label{eq:der four 2}
\left| \fourierCoeff{\frac{\partial^{\smoothness}{g}}{\partial \uncertainParam_j^{\smoothness}}}{\multiindex} \right| = |2\pi \multiindex_j|^{\smoothness} \left|\fourierCoeff{g}{\multiindex}\right|.
\end{equation}

We will also make use of the following result regarding covering numbers in our discussion in this section. 
\begin{lemma}\label{l:covering num 2}
Let $(M, \genMetric)$ be a metric space and let $\phi: M \lra M$ be a map satisfying
\begin{equation*}
\genMetric(a, \phi(a)) \leq \scAccuracy \text{ \ for all $a \in M$}.
\end{equation*}
If $U \subset M$ is a subset, then
\begin{equation*}
\coveringNum{U}{3\scAccuracy} \leq \coveringNum{\phi(U)}{\scAccuracy}.
\end{equation*}
\end{lemma}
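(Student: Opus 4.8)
The plan is to start from an optimal internal $\scAccuracy$-cover of the image $\phi(U)$ and pull it back through $\phi$ to obtain a $3\scAccuracy$-cover of $U$ itself. Set $n \Let \coveringNum{\phi(U)}{\scAccuracy}$; by definition there is an $\scAccuracy$-cover $(c_j)_{j=1}^{n} \subset \phi(U)$, so that for every $y \in \phi(U)$ some $c_j$ satisfies $\genMetric(y, c_j) \leq \scAccuracy$.

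The key structural point---and the reason the scale degrades from $\scAccuracy$ to $3\scAccuracy$ rather than merely $2\scAccuracy$---is that covering numbers in this paper are defined via \emph{internal} covers, whose elements must belong to the set being covered. Thus the points $c_j$, which lie in $\phi(U)$ and not necessarily in $U$, cannot be used directly as a cover of $U$. Instead, since each $c_j \in \phi(U)$, I would select a preimage $u_j \in U$ with $\phi(u_j) = c_j$ for each $j$, and then show that the resulting collection $(u_j)_{j=1}^{n} \subset U$ is a $3\scAccuracy$-cover of $U$.

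For the verification, given an arbitrary $u \in U$, its image $\phi(u)$ lies in $\phi(U)$, so there is an index $j$ with $\genMetric(\phi(u), c_j) \leq \scAccuracy$. Two applications of the triangle inequality, combined with the hypothesis $\genMetric(a, \phi(a)) \leq \scAccuracy$ applied at $a = u$ and at $a = u_j$, yield
\[
\genMetric(u, u_j) \leq \genMetric(u, \phi(u)) + \genMetric(\phi(u), \phi(u_j)) + \genMetric(\phi(u_j), u_j) \leq 3\scAccuracy,
\]
where the middle term equals $\genMetric(\phi(u), c_j) \leq \scAccuracy$ because $\phi(u_j) = c_j$. Hence every $u \in U$ lies within $3\scAccuracy$ of some $u_j$, so $(u_j)_{j=1}^{n}$ is an internal $3\scAccuracy$-cover of $U$ of cardinality $n$, and therefore $\coveringNum{U}{3\scAccuracy} \leq n = \coveringNum{\phi(U)}{\scAccuracy}$.

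The entire argument is elementary, consisting of a single bookkeeping step followed by the triangle inequality; the only aspect demanding genuine attention is the internal-cover requirement, which is precisely what forces the passage to preimages and absorbs the extra factor relating the two covering scales.
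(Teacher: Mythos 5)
Your argument is correct and is essentially identical to the paper's own proof: both take an internal $\scAccuracy$-cover of $\phi(U)$, choose a preimage in $U$ for each cover point, and bound $\genMetric(u,u_j)$ by the same three-term triangle inequality to obtain an internal $3\scAccuracy$-cover of $U$. Your write-up is merely a bit more explicit about selecting the preimages, which the paper does implicitly by writing the cover of $\phi(U)$ in the form $\bigl(\phi(a_i)\bigr)_i$ with $a_i \in U$.
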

\begin{proof}
Pick $U \subset M$ and let $\bigl(\phi(a_i)\bigr)_{i=1}^{\coveringNum{\phi(U)}{\scAccuracy}} \subset \phi(U)$ be an $\scAccuracy$-cover of $\phi(U)$. We demonstrate that $(a_i)_{i=1}^{\coveringNum{\phi(U)}{\scAccuracy}} \subset U$ is a $3\scAccuracy$-cover of $U$. To see this, let $a$ be an arbitrary element of $U$. Then for some $i$, we have
\begin{align*}
\genMetric(a, a_i) \leq \genMetric(a, \phi(a)) + \genMetric(\phi(a), \phi(a_i)) + \genMetric(\phi(a_i), a_i) \leq 3\scAccuracy.
\end{align*}
The assertion follows.
\end{proof}

Recall that for $\multiindex \in \Z^{\uncertainDim}$, the infinity norm is defined as $\infnorm{\multiindex} \Let \max\limits_{i=1, \ldots,\uncertainDim} |\multiindex_i|.$ For a multiindex $\multiindex$, we define \(\multiindex_{\infty}\) is any element in \(\argmax\limits_{i =1, \ldots,\uncertainDim} |\multiindex_i|\). It is a standard fact that for a given $m \in \N$, the number of $\multiindex \in \Z^{\uncertainDim}$ with $\infnorm{\multiindex} = m$ is given by 
\begin{equation}
	\label{eq: inf multi num}
	\begin{aligned}
		\constNum{m} &\Let (2m+1)^{\uncertainDim} - (2m-1)^{\uncertainDim} = \sum_{j = 0}^{\uncertainDim} {\uncertainDim \choose j}2^{\uncertainDim-j}m^{\uncertainDim-j}(1-(-1)^{j}) \\
		&= \sum_{j = 0}^{\lceil\frac{\uncertainDim}{2}\rceil} {\uncertainDim \choose {2j+1}}2^{\uncertainDim-2j}m^{\uncertainDim-2j-1} \leq 4^{\uncertainDim}m^{\uncertainDim-1}.
	\end{aligned}
\end{equation}
Recalling the definitions of $\smoothness$ and $\uncertainDim$ in Assumption \ref{ass:smooth periodic}, we will have occasion to employ the fact that
\begin{equation}
\label{eq:def c2}
\sum_{m > N} \frac{1}{m^{2\smoothness-\uncertainDim+1}} \leq \frac{2}{N^{2\smoothness-\uncertainDim}}.
\end{equation}
To see why this is true, observe that
\begin{align}
	\sum_{m > N} \frac{N^{2\smoothness-\uncertainDim}}{m^{2\smoothness-\uncertainDim+1}} & \leq \sum_{m \geq N} \frac{N^{2\smoothness-\uncertainDim}}{m^{2\smoothness-\uncertainDim+1} } = \frac{1}{N}\sum_{m \geq 0} \frac{N^{2\smoothness-\uncertainDim+1}}{\left(N+m\right)^{2\smoothness-\uncertainDim+1} }\nonumber\\
	& = \frac{1}{N}\sum_{m \geq 0} \frac{1}{\left(1+\frac{m}{N}\right)^{2\smoothness-\uncertainDim+1}}.\label{eq:def c3}
\end{align}
The right-hand side of \eqref{eq:def c3} is the upper Darboux sum of the function $x \mapsto \left(\frac{1}{1+x}\right)^{2\smoothness-\uncertainDim+1}$, and hence decreases with increasing $N$; in particular, if $N = 1$, it is equal to $\sum_{m \geq 1}\frac{1}{m^{2\smoothness-\uncertainDim+1}}$. Consequently,
\begin{equation}
\label{eq:def c4}
\sum_{m > N} \frac{N^{2\smoothness-\uncertainDim}}{m^{2\smoothness-\uncertainDim+1}} \leq \sum_{m \geq 1}\frac{1}{m^{2\smoothness-\uncertainDim+1}} \overset{(\dag)}{\leq} \sum_{m \geq 1}\frac{1}{m^2} \leq 2,
\end{equation}
where the inequality in step \((\dag)\) follows from the assumption that $2\smoothness-\uncertainDim \geq 1$. \eqref{eq:def c2} now follows directly from \eqref{eq:def c4}.

We begin our proof of Lemma \ref{l:smooth periodic covering num} with the following Lemma. 

\begin{lemma}
	\label{l: freq lim approx torus}
	Consider the problem \eqref{eq:ROP}, and suppose that the set of functions $\scRvSet$ defined in \eqref{eq: Kf def} and the uncertainty set $\uncertainSet$ satisfy Assumption \ref{ass:smooth periodic}. Then, for every $\scAccuracy > 0$, if $N \in \Z$ is picked such that $N \geq \torusCons$, then
	\begin{equation*}
	\sup_{\scOptElement \in \scOptSet} \unifnorm{\scCost(\scOptElement, \cdot) - \sum_{\multiindex \in [-N, N]^{\uncertainDim}\cap\Z^{\uncertainDim}} \fourierCoeff{\scCost(\scOptElement, \cdot)}{\multiindex}\,\epower{2\pi i\inprod{\multiindex}{\cdot}}} \leq \scAccuracy. 
	\end{equation*}
\end{lemma}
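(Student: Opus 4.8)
The plan is to express the approximation error as the high-frequency tail of the Fourier series of $\scCost(\scOptElement, \cdot)$ and to control this tail uniformly in $\scOptElement$ using the $\ltwospace$-bounds on the $\smoothness$-th derivatives furnished by Assumption \ref{ass:smooth periodic}. Since each $\scCost(\scOptElement, \cdot)$ is $\smoothness$-times continuously differentiable with $\smoothness \geq 1$, its Fourier series converges to it pointwise, so for every $\uncertainParam$ the quantity inside the supremum norm equals $\sum_{\infnorm{\multiindex} > N} \fourierCoeff{\scCost(\scOptElement, \cdot)}{\multiindex}\,\epower{2\pi i\inprod{\multiindex}{\uncertainParam}}$. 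Bounding each exponential by $1$, it suffices to show that the $\ell^1$-tail $\sum_{\infnorm{\multiindex} > N} |\fourierCoeff{\scCost(\scOptElement, \cdot)}{\multiindex}|$ is at most $\scAccuracy$, uniformly over $\scOptElement \in \scOptSet$, as soon as $N \geq \torusCons$.

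First I would trade coefficient decay for derivative information. For a multiindex $\multiindex$ with $\infnorm{\multiindex} = m$, writing $\multiindex_\infty$ for a coordinate attaining the maximum, the differentiation formula \eqref{eq:der four 2} gives $(2\pi m)^{\smoothness}|\fourierCoeff{\scCost(\scOptElement, \cdot)}{\multiindex}| = \bigl|\fourierCoeff{\frac{\partial^{\smoothness}\scCost(\scOptElement, \cdot)}{\partial \uncertainParam_{\multiindex_\infty}^{\smoothness}}}{\multiindex}\bigr|$; since the right-hand side is one nonnegative term among $\uncertainDim$, this yields $(2\pi m)^{2\smoothness}|\fourierCoeff{\scCost(\scOptElement, \cdot)}{\multiindex}|^2 \leq \sum_{i=1}^{\uncertainDim}\bigl|\fourierCoeff{\frac{\partial^{\smoothness}\scCost(\scOptElement, \cdot)}{\partial \uncertainParam_i^{\smoothness}}}{\multiindex}\bigr|^2$. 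Next I would apply the Cauchy--Schwarz inequality to the $\ell^1$-tail with the weights $w_\multiindex \Let (2\pi \infnorm{\multiindex})^{\smoothness}$, factoring it as the product of $\bigl(\sum_{\infnorm{\multiindex}>N} w_\multiindex^{-2}\bigr)^{1/2}$ and $\bigl(\sum_{\infnorm{\multiindex}>N} w_\multiindex^{2}|\fourierCoeff{\scCost(\scOptElement, \cdot)}{\multiindex}|^2\bigr)^{1/2}$. The second factor is bounded, via the previous inequality, the Plancherel identity \eqref{eq: plancherel} applied to the derivatives, and the elementary estimate $\sum_i a_i^2 \le (\sum_i a_i)^2$ for $a_i \geq 0$, by $\ltwoderbound$, which is independent of $\scOptElement$.

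For the first factor I would group the lattice points according to $\infnorm{\multiindex} = m$, insert the count $\constNum{m} \leq 4^{\uncertainDim} m^{\uncertainDim-1}$ from \eqref{eq: inf multi num}, and then invoke the summability estimate \eqref{eq:def c2}; this produces a bound of the form $C\, N^{-(\smoothness - \uncertainDim/2)}$ with $C$ depending only on $\smoothness$ and $\uncertainDim$. Combining the two factors gives $\sum_{\infnorm{\multiindex}>N}|\fourierCoeff{\scCost(\scOptElement, \cdot)}{\multiindex}| \le C\,\ltwoderbound\, N^{-(\smoothness-\uncertainDim/2)}$, uniformly in $\scOptElement$; solving $C\,\ltwoderbound\, N^{-(\smoothness-\uncertainDim/2)} \le \scAccuracy$ for $N$ produces precisely the threshold $\torusCons$, after which taking the supremum over $\scOptElement \in \scOptSet$ finishes the argument.

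The crux of the whole estimate lies in the convergence of $\sum_{m>N}\constNum{m}/m^{2\smoothness}$, i.e.\ of $\sum_{m>N} m^{-(2\smoothness-\uncertainDim+1)}$: this is where the hypothesis $\smoothness > \uncertainDim/2$ is indispensable, since for integer $\smoothness$ it forces $2\smoothness - \uncertainDim \geq 1$ and hence an exponent strictly larger than $2$, making \eqref{eq:def c2} applicable. Without this regularity-versus-dimension condition the tail would fail to be summable and no uniform bound of the required type could exist, so I expect the accounting around \eqref{eq:def c2} and the lattice-point count to be the only genuinely delicate point; the remaining manipulations are routine once the weights $w_\multiindex$ are chosen to convert derivative $\ltwospace$-bounds into coefficient decay.
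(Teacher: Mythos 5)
Your proposal is correct and follows essentially the same route as the paper's proof: pointwise Fourier representation, triangle inequality on the tail, the differentiation formula \eqref{eq:der four 2} with the maximizing coordinate $\multiindex_\infty$, Cauchy--Schwarz with the weights $(2\pi\infnorm{\multiindex})^{\smoothness}$, Plancherel plus Assumption \ref{ass:smooth periodic}(iv) for one factor, and the lattice count \eqref{eq: inf multi num} with the summability estimate \eqref{eq:def c2} for the other. The only difference is trivial bookkeeping in how the single derivative coefficient is dominated by the sum over all $\uncertainDim$ coordinates (you square first; the paper uses the $\ell^2$ triangle inequality), and both yield the same final bound and threshold for $N$.
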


\begin{proof}
For any $\scOptElement \in \scOptSet$ and $\uncertainParam \in \uncertainSet$,
\begin{multline*}
	\left| \scCost(\scOptElement, \uncertainParam) - \sum_{\multiindex \in [-N, N]^{\uncertainDim}\cap\Z^{\uncertainDim}} \fourierCoeff{\scCost(\scOptElement, \cdot)}{\multiindex}\epower{2\pi i\inprod{\multiindex}{\uncertainParam}} \right| = \left|\sum_{\infnorm{\multiindex} > N} \fourierCoeff{\scCost(\scOptElement, \cdot)}{\multiindex}\epower{2\pi i\inprod{\multiindex}{\cdot}}  \right| \\
	\leq \sum_{\infnorm{\multiindex} > N} \left|\fourierCoeff{\scCost(\scOptElement, \cdot)}{\multiindex}\right| = \sum_{\infnorm{\multiindex} > N} \left| \fourierCoeff{\frac{\partial^{\smoothness}{\scCost(\scOptElement, \cdot)}}{\partial \uncertainParam_{\multiindex_\infty}^{\smoothness}}}{\multiindex} \right| \left| \frac{1}{\left(2\pi\infnorm{\multiindex}\right)^{\smoothness}} \right|,
\end{multline*}
	where the last equality follows from \eqref{eq:der four 2}. From the Schwarz inequality and Assumption \ref{ass:smooth periodic} we get the estimate
\begin{multline}
	\sum_{\infnorm{\multiindex} > N} \left| \fourierCoeff{\frac{\partial^{\smoothness}{\scCost(\scOptElement, \cdot)}}{\partial \uncertainParam_{\multiindex_\infty}^{\smoothness}}}{\multiindex} \right| \left| \frac{1}{\left(2\pi\infnorm{\multiindex}\right)^{\smoothness}} \right| \\
	\leq \sqrt{\sum_{\infnorm{\multiindex} > N} \left| \fourierCoeff{\frac{\partial^{\smoothness}{\scCost(\scOptElement, \cdot)}}{\partial \uncertainParam_{\multiindex_\infty}^{\smoothness}}}{\multiindex} \right|^2}\sqrt{\sum_{\infnorm{\multiindex} > N}\left| \frac{1}{\left(2\pi \infnorm{\multiindex}\right)^{\smoothness}} \right|^2} \\
	\leq \sqrt{\sum_{\infnorm{\multiindex} > N} \left( \sum_{1 \leq j \leq \uncertainDim} \left| \fourierCoeff{\frac{\partial^{\smoothness}{\scCost(\scOptElement, \cdot)}}{\partial \uncertainParam_{j}^{\smoothness}}}{\multiindex} \right|\right)^2}\sqrt{\sum_{\infnorm{\multiindex} > N}\left| \frac{1}{\left(2\pi\infnorm{\multiindex}\right)^{\smoothness}} \right|^2} \\
	\leq \sum_{1 \leq j \leq \uncertainDim} \sqrt{\sum_{\infnorm{\multiindex} > N} \left| \fourierCoeff{\frac{\partial^{\smoothness}{\scCost(\scOptElement, \cdot)}}{\partial \uncertainParam_j^{\smoothness}}}{\multiindex} \right|^2}\sqrt{\sum_{\infnorm{\multiindex} > N}\left| \frac{1}{\left(2\pi \infnorm{\multiindex}\right)^{\smoothness}} \right|^2} \\
	\leq \ltwoderbound \sqrt{\sum_{\infnorm{\multiindex} > N}\left| \frac{1}{\left(2\pi\infnorm{\multiindex}\right)^{\smoothness}} \right|^2}.
	\label{eq:flat 2}
\end{multline}
To estimate the sum on the right hand side of \eqref{eq:flat 2}, we recall the definition of $\constNum{m}$ in \eqref{eq: inf multi num} and arrive at
\begin{equation}
\label{eq:flat 3}
\begin{aligned}
	\sqrt{\sum_{\infnorm{\multiindex} > N}\left| \frac{1}{\infnorm{\multiindex}^{\smoothness}} \right|^2} & = \sqrt{\sum_{m > N} \constNum{m} \frac{1}{m^{2\smoothness}}} \leq \sqrt{4^{\uncertainDim} \sum_{m > N} \frac{1}{m^{2\smoothness-\uncertainDim+1}}}\\
	& \leq \sqrt{\frac{4^{\uncertainDim}2}{N^{2\smoothness-\uncertainDim}}}.
\end{aligned}
\end{equation}

Putting all the above estimates together, we get
\begin{equation}
\label{eq:flat 5}
\sup_{\scOptElement \in \scOptSet} \unifnorm{\scCost(\scOptElement, \cdot) - \sum_{\multiindex \in [-N, N]^{\uncertainDim}\cap\Z^{\uncertainDim}} \fourierCoeff{\scCost(\scOptElement, \cdot)}{\multiindex}\epower{2\pi i\inprod{\multiindex}{\cdot}}} \leq \frac{\sqrt{2}\ltwoderbound 2^{\uncertainDim}}{(2\pi)^{\smoothness}} \sqrt{\frac{1}{N^{2\smoothness-\uncertainDim}}}.
\end{equation}
Since $N \geq \torusCons$, by hypothesis, \eqref{eq:flat 5} gives us
\begin{equation}
\sup_{\scOptElement \in \scOptSet} \unifnorm{\scCost(\scOptElement, \cdot) - \sum_{\multiindex \in [-N, N]^{\uncertainDim}\cap\Z^{\uncertainDim}} \fourierCoeff{\scCost(\scOptElement, \cdot)}{\multiindex}\epower{2\pi i\inprod{\multiindex}{\cdot}}} \leq \scAccuracy,
\end{equation}
proving the assertion.
\end{proof}

We are finally ready for the Proof of Lemma \ref{l:smooth periodic covering num}.
\begin{proof}[Proof of Lemma \ref{l:smooth periodic covering num}]
Pick \(\scAccuracy > 0\), and consider the family of functions 
\[ 
	\scRvSet^{\scAccuracy} \Let \set[\Bigg]{\sum_{\multiindex \in [-N, N]^{\uncertainDim}\cap\Z^{\uncertainDim}} \fourierCoeff{\scCost(\scOptElement, \cdot)}{\multiindex}\epower{2\pi i\inprod{\multiindex}{\cdot}} : \uncertainSet \rightarrow \R \suchthat \scOptElement \in \scOptSet},
\]
where $N = \left\lceil \torusConsP{12}\right\rceil$ with the various constants as defined in Lemma \ref{l: freq lim approx torus} and the discussion before it. Lemma \ref{l: freq lim approx torus} in conjunction with Lemma \ref{l:covering num 2} gives us
\begin{equation}
\coveringNum{\scRvSet}{\frac{\scAccuracy}{4}} = \coveringNum{\scRvSet^{\scAccuracy}}{\frac{\scAccuracy}{12}}.
\end{equation}
Observe that by the Plancherel's identity \eqref{eq: plancherel}, for each $\scOptElement \in \scOptSet$ we have
\begin{align*}
\sum_{\multiindex \in [-N, N]^{\uncertainDim}\cap\Z^{\uncertainDim}} |\fourierCoeff{\scCost(\scOptElement, \cdot)}{\multiindex}|^2 &\leq \sum_{\multiindex \in \Z^{\uncertainDim}} |\fourierCoeff{\scCost(\scOptElement, \cdot)}{\multiindex}|^2 \leq \Ltwonorm{\scCost(\scOptElement, \cdot)}{\torus{\uncertainDim}} \leq \ltwobound.
\end{align*}
This means that $\scRvSet^{\scAccuracy}$ is a family of bandlimited trigonometric polynomials with bounded $\ltwospace$-norm. Consequently, Lemma \ref{l:trig covering num} applies to $\scRvSet^{\scAccuracy}$, and we have the estimate
\begin{equation*}
\coveringNum{\scRvSet}{\frac{\scAccuracy}{2}} \leq \coveringNum{\scRvSet^{\scAccuracy}}{\frac{\scAccuracy}{12}} \leq  \frac{1}{\dimTrig(\scAccuracy)}{\biggl(\frac{\pi \dimTrig(\scAccuracy)^2}{2}\biggr)}^{\dimTrig(\scAccuracy)} {\left(\frac{\scAccuracy}{12\trigLtwoBound}\right)}^{-2\dimTrig(\scAccuracy)},
\end{equation*}
where $\dimTrig(\scAccuracy) = \left(2\left\lceil \torusConsP{12} \right\rceil+1\right)^{\uncertainDim}$. This proves the assertion, thereby completing our proof.
\end{proof}

	\bigskip

\end{document}